\titleformat{\chapter}[display]{\bfseries\huge}{\filright\huge\chaptertitlename~\thechapter}{3ex}{\titlerule\vspace{1ex}\filright}[\vspace{1ex}\titlerule]
\newtheorem{thm}{Theorem}[section]
\newtheorem{prop}[thm]{Proposition}
\newtheorem{lem}[thm]{Lemma}
\newtheorem{cor}[thm]{Corollary}
\theoremstyle{definition}
\theoremstyle{definition}
\theoremstyle{remark}
\newtheorem{remark}[thm]{Remark}
\newenvironment{feqn*}{\begin{mdframed}\begin{equation*}}{\vspace{1mm}
\end{equation*}\end{mdframed}}
\numberwithin{equation}{section}
\newcommand{\N}{\mathbb{N}}
\newcommand{\Z}{\mathbb{Z}}
\newcommand{\R}{\mathbb{R}}
\newcommand{\CA}{\mathcal{A}}
\newcommand{\CB}{\mathcal{B}}
\newcommand{\CP}{\mathcal{P}}
\newcommand{\M}{\mathfrak{M}}
\newcommand{\m}{\mathfrak{m}}
\newcommand{\bs}\boldsymbol{}
\renewcommand{\geq}{\geqslant}
\renewcommand{\leq}{\leqslant}
\renewcommand{\hat}{\widehat}
\renewcommand{\tilde}{\widetilde}
\renewcommand{\mod}[1]{\,({\rm mod}\,#1)}
\newcommand{\vect}[1]{\overrightarrow{\boldsymbol{#1}}}
\definecolor{blue}{rgb}{.2,.6,.75}
\definecolor{green}{rgb}{.4,.7,.4}
\definecolor{red}{rgb}{1,0,0}
\titleformat{\section}[block]{\scshape\centering}{\arabic{section}.}{1ex}{}{}
\titleformat{\subsection}[block]{\scshape\bfseries}{\thesubsection}{1ex}{}{}
\begin{document}




\title[Missing digits and sums of two prime squares]{Missing digits and sums of two prime squares}

\author{Cihan Sabuncu}

\address{Max Planck Institute for Mathematics \\
Vivatsgasse 7 \\
53111 Bonn \\
Germany}
\email{sabuncu@mpim-bonn.mpg.de}


\subjclass[2010]{}

\date{\today}

\begin{abstract}
We investigate integers whose base $g$ expansion omits a fixed digit and which can be represented as a sum of two prime squares. In the first part of the paper, we apply the Hardy--Littlewood circle method to obtain asymptotic formulas for weighted count of representations of such integers up to $g^k$ as $k\to\infty$, where we weight by the von Mangoldt function. In this case, we also get an interesting bias depending on the fixed digit we are missing. In the second part, combining the circle method with sieve methods, we study the second moment of the corresponding unweighted counting function. This allows us to get a nontrivial lower bound for the cardinality of the set
$$\{ n \leq g^k : n \text{ omits the digit } b \text{ in its base } g \text{ expansion and } n = p^2 + q^2 \text{ for some primes } p,q \}. $$
\end{abstract}

\maketitle

\section{Introduction}
Numbers with missing digits have been studied extensively in number theory because of their distinctive arithmetic properties. This is particularly true in sieve theory, where such sets are of interest due to their sparsity. In recent years, missing digit problems have also been approached using the Hardy--Littlewood circle method. This approach is viable because the indicator function of the set of missing digit numbers admits strong Fourier decay properties. Exploiting this structure, Maynard \cite{MR4452438} showed that for a sufficiently large base $g$, there are infinitely many primes whose base $g$ expansion omits a given digit $0\leq b<g$. He later refined this result to work for the base $g=10$, see \cite{MR3958793}.

Following the work of Maynard \cite{MR4452438}, there has been substantial progress on problems concerning primes with missing digits. In particular, attention has turned to variants of the classical problems in number theory incorporating missing digit restrictions. One such example is the Vinogradov three primes theorem. In this direction, Maier and Rassias \cite{MR4546880} established a version of Vinogradov’s three primes theorem involving one missing digit prime and two Piatetski–Shapiro primes. This was later refined by Leng and Sawhney \cite{MR4855348}, who proved Vinogradov’s three primes theorem with all three primes missing a digit.

Another central problem in number theory, particularly in the context of the Hardy--Littlewood circle method, is Waring's problem. Given a positive integer $n$, the problem asks for the smallest number of $k$-th powers required to represent $n$ as their sum. This question has been studied extensively, see Vaughan and Wooley \cite{MR1956283} for a survey on this problem.

In a variant of this problem where the $k$-th powers are required to come from numbers with restricted digit expansions, Green \cite{MR4922764} proved that every sufficiently large integer can be written as a sum of at most $ g^{160k^2} $ terms of the form $x^k$, where $x$ has a base $g$ expansion using only two digits.

In this paper, we will study a problem that is somewhat related to both the Waring problem and the missing digit problem. For this, we start by defining the normalized representation function for sums of two prime squares
$$ r_2(n) := \sum_{a^2+b^2=n} \Lambda(a)\Lambda(b), $$
where $\Lambda(\cdot)$ is the von Mangoldt function.
Then we know by the Prime Number Theorem in arithmetic progressions, for any $A>0$, $1\leq q\leq (\log x)^A$ and $0\leq a< q$,
$$ \sum_{\substack{n\leq x \\ n\equiv a \mod q}} r_2(n) =  \frac{\rho(a;q)}{\varphi^2(q)} \cdot \frac{\pi}{4} x + O_A \bigg( \frac{x}{(\log x)^A}\bigg), $$
where $\rho(a;q):=\#\{ (u,v)\in (\Z/q\Z)^2 : (uv,q)=1,  u^2+v^2\equiv a \mod q \}$.
Our first theorem is about the average of this function over the missing digit set. For this, we fix the base $g\geq 2$, and let $X=g^k$ for some $k$ large. We take the digit we want to miss $0\leq b < g$. Now, we can define $\CA:=\{n \in \N : n=\sum_{i=0}^{\infty} a_i g^i , a_i\not=b\}$, and $\CA(X) := \CA \cap [1,X]$.
\begin{thm}\label{averaging_result}
Fix $g\geq 2$ sufficiently large. Let $A>0$ be given. We have as $k\to\infty$, for $X=g^k$,
\begin{equation}\label{average_sum}
\sum_{n\in \CA(X)} r_2(n) = \mathfrak{S}(b,g) \cdot \frac{\pi}{4} \#\CA(X) + O_{A}\bigg( \frac{\#\CA(X)}{(\log X)^A} \bigg) ,
\end{equation}
where 
$$ \mathfrak{S}(b,g)=\frac{g}{g-1} \bigg( 1 - \frac{\rho(b;g)}{\varphi^2(g)} \bigg) ,$$
and $\rho(a;q)$ is as before. 
\end{thm}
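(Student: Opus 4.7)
The natural approach is the Hardy--Littlewood circle method. Setting
$$T(\theta) := \sum_{m \leq X^{1/2}} \Lambda(m)\, e(m^2 \theta), \qquad S_{\CA}(\theta) := \sum_{n \in \CA(X)} e(n\theta),$$
orthogonality gives
$$\sum_{n \in \CA(X)} r_2(n) \;=\; \int_0^1 T(\theta)^2\, S_{\CA}(-\theta)\, d\theta.$$
I would partition $[0,1]$ into major arcs $\CM$, a disjoint union of intervals of radius $P/X$ around reduced fractions $a/q$ with $q \leq P := (\log X)^C$ for $C = C(A)$ large, and the complementary minor arcs $\mathfrak{m}$.

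On the major arcs, for $\theta = a/q + \beta \in \CM$, the PNT in arithmetic progressions yields the standard approximation $T(a/q + \beta) \approx \varphi(q)^{-1} S_2(a,q)\, V(\beta)$ with $S_2(a,q) = \sum_{(r,q)=1} e(ar^2/q)$ and $V(\beta) = \int_0^{X^{1/2}} e(\beta t^2)\, dt$. For $S_{\CA}$ I would decompose by residue class modulo $q$: when $(q,g) = 1$, Maynard's Fourier decay at rationals of denominator $q$ yields equidistribution of $\CA(X)$ in arithmetic progressions mod $q$, while when $q \mid g^s$ the residues are determined explicitly by the base-$g$ digits (the class $r \bmod g^s$ is empty if some digit of $r$ equals $b$, and has size $\#\CA(X)/(g-1)^s$ otherwise). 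Combining these, $S_{\CA}(a/q+\beta) \approx \#\CA(X)\, D(a,q)\, W(\beta)$ for a density factor $D$ and archimedean function $W$. Inserting into $\int_\CM T^2\, S_{\CA}(-\cdot)$, the archimedean integral reduces via Gauss's circle identity to $\pi X/4$, and the arithmetic sum $\sum_q \varphi(q)^{-2}\sum_{(a,q)=1} S_2(a,q)^2\, D(a,q)$ factorizes over primes: those with $p \nmid g$ contribute $1$ (since $\CA$ decouples from such $p$), and the contribution from primes dividing $g$ collapses to
$$\frac{g}{g-1}\sum_{a \neq b}\frac{\rho(a;g)}{\varphi^2(g)} \;=\; \frac{g}{g-1}\bigg(1 - \frac{\rho(b;g)}{\varphi^2(g)}\bigg),$$
producing the main term $\tfrac{\pi}{4}\mathfrak{S}(b,g)\,\#\CA(X)$.

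The minor-arc contribution is the principal technical obstacle, and is the point at which the hypothesis that $g$ be sufficiently large enters. Since $\#\CA(X) = X^{\log_g(g-1)}$ is only marginally smaller than $X$, Parseval's bound $\int_0^1 |T|^2\, d\theta \ll X^{1/2}\log X$ combined with
$$\bigg|\int_\mathfrak{m} T^2\, S_{\CA}(-\cdot)\bigg| \;\leq\; \sup_{\theta \in \mathfrak{m}} |S_{\CA}(\theta)|\cdot \int_0^1 |T(\theta)|^2\, d\theta$$
requires a pointwise Fourier-decay bound $\sup_{\mathfrak{m}} |S_{\CA}| \ll \#\CA(X)\, X^{-\eta}$ with $\eta > 1/2$. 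For $g$ sufficiently large, Maynard's Fourier estimate delivers precisely this. If the available pointwise decay falls short of $\eta > 1/2$, the fallback is a restriction-type bound pairing $L^p$ control on $S_{\CA}$ with the fact that the $L^2$-mass of $T^2$ concentrates on major arcs; in either implementation, the whole difficulty of the proof concentrates on making this minor-arc estimate precise.
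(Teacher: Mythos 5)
Your major-arc outline is broadly consistent with the paper (the paper works with the discrete sum $\frac1X\sum_{a\le X}\hat{1}_{\CA}(a/X)S(-a/X)$, keeps $\hat{1}_{\CA}$ exact, and extracts the local factor from the denominators $s\mid g^J$ exactly as your digit description suggests), but your minor-arc argument contains a genuine gap that would sink the proof. You propose to bound $\int_{\m}T^2S_{\CA}$ by $\sup_{\m}|S_{\CA}|\cdot\int_0^1|T|^2$ and assert that Maynard's Fourier estimate gives $\sup_{\m}|S_{\CA}|\ll\#\CA(X)X^{-\eta}$ with $\eta>1/2$. No such pointwise bound exists, and in fact it is false. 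The available $L^\infty$ estimate (the paper's Lemma \ref{Linfty_bound_for_fourier_transform}) saves only $X^{-c_g/\log s}$, which is a power saving only when $s$ is very small; for denominators of size $X^{\delta}$ it saves merely a constant. Worse, the minor arcs contain points where $|\hat{1}_{\CA}|$ has essentially no decay at all: for $\theta=r/g^J$ with $g^J$ just above the major-arc cutoff $(\log X)^B$, the factorization of $\hat{1}_{\CA}$ gives
$$\absbg{\hat{1}_{\CA}(1/g^J)}\;\gg_g\;(g-1)^{k-J}\asymp\frac{\#\CA(X)}{(g-1)^{J}}\gg\frac{\#\CA(X)}{(\log X)^{O_g(B)}},$$
and in fact all but $O(1)$ of the digit factors are of full size $g-1$, so the sup over $\m$ is $\gg_g\#\CA(X)$ up to a bounded factor. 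Since your bound needs a saving of more than $X^{1/2}$ in $S_{\CA}$ (because $\int_0^1|T|^2\asymp X^{1/2}\log X$ while the main term is $\asymp\#\CA(X)=X^{1-o_g(1)}$), this route cannot be repaired by sharpening constants: the loss comes from discarding all cancellation in $T$ on the minor arcs.

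The paper's treatment is the mirror image of yours, and this is the missing idea. On the minor arcs one exploits cancellation in the quadratic prime Weyl sum $\sum_{n\le x}\Lambda(n)e(\alpha n^2)$ pointwise (the paper's Lemma \ref{trigonometric_sum_over_primes}: a Ghosh/Harman--Kumchev-type bound for moderate denominators and Vinogradov's bound with the exponent $1/192$ for $s\ge(\log N)^{3073}$), giving pointwise decay of $S(\theta)=\sum_{n\le X}r_2(n)e(\theta n)$, while $\hat{1}_{\CA}$ is only controlled \emph{on average} over each dyadic family of arcs via Maynard's hybrid bound (Lemma \ref{hybrid_bound_for_fourier_transform}), which loses $(S^2D)^{\alpha_g}$; the hypothesis that $g$ be sufficiently large enters precisely to make $\alpha_g$ small enough that this loss is beaten by the $1/192$- and $1/2$-power savings from the prime-square sums. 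Your ``fallback'' remark about concentration of the $L^2$-mass of $T^2$ on major arcs does not supply this either: $\int_0^1|T|^2$ is just $\sum_{m\le\sqrt X}\Lambda(m)^2$ and exhibits no such concentration; what is actually needed is the pairing of pointwise minor-arc cancellation in $T^2$ with an $L^1$/large-sieve-type average of $|\hat{1}_{\CA}|$, which your proposal never identifies.
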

In this regard, there has been some work on averages of arithmetic functions over missing digit numbers. More recently, Nath \cite[Theorem 7]{MR4680227} proved that for arithmetic functions $f$ with $|f| \ll \log X$ satisfying certain natural hypotheses, one gets a general theorem on the level of distribution of $f(n)$. More precisely, he shows that for $|\sigma|\leq \tau$ an arithmetic function supported on $[1,X^{1/2})$, $c_d$ some congruence class, and
$$\lambda_d = \lim_{y\to\infty} \frac{1}{y}\sum_{\substack{n\leq y \\ (n,d)=1}} f(n),$$
the quantity
$$\sigma(d) \bigg(\sum_{\substack{n\in \mathcal{A}(X) \\ n\equiv c_d \mod d}} f(n)- \frac{\lambda_d}{\varphi(d)}\frac{b}{\varphi(b)} \#\mathcal{A}(X) \bigg)  , $$
is small on average over moduli $d < X^{1/2}$.

\begin{remark}
We note that the local factor $\mathfrak{S}(b,g)$ is maximized when $\exists p| g$ such that $p\equiv 3\mod 4$, and $b=0$, for which, we have $\mathfrak{S}(b,g)=g/(g-1)$. It is minimized when $\forall p|g$ we have $p\equiv 1 \mod 4$ and $b=0$, for which $\mathfrak{S}(b,g)= \frac{g}{g-1} (1 - 2^{\omega(g)}/\varphi(g))$ where $\omega(g)$ is the number of distinct prime factors of $g$. See Lemma \ref{rho_function_properties} for values of $\rho(a;q)$. Moreover,
$$ \frac{1}{g} \sum_{b\mod g} \mathfrak{S}(b,g) = 1 ,$$
\textit{i.e.} the local factor is $1$ on average over the digit $b\mod g$.
\end{remark}
Now, we can define the regular representation function for sums of two prime squares
$$ r_*(n) := \#\{(p,q)\in \CP^2 : p^2+q^2=n \}, $$
where $\CP$ is the set of prime numbers.
This function has been studied before in the works \cite{MR2418801, MR1833070, Erdos1938, MR4756119, MR4389024}. Hence, we can use Theorem \ref{averaging_result} along with some counting arguments to get a bound for this function.
\begin{cor}\label{main_cor}
Fix $g\geq 2$ sufficiently large. We have as $k\to\infty$, for $X=g^k$,
$$\sum_{n\in \CA(X)} r_*(n) \asymp \frac{\#\CA(X)}{(\log X)^2}. $$
\end{cor}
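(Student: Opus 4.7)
The plan is to sandwich $\sum_{n\in\CA(X)} r_*(n)$ between quantities comparable to $\#\CA(X)/(\log X)^2$ by relating $r_*$ to $r_2$ and invoking Theorem~\ref{averaging_result}.

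For the lower bound, I would split $r_2(n)$ into the contribution from pairs of actual primes and a remainder involving at least one proper prime power. Since any primes $p,q$ with $p^2+q^2 = n \leq X$ satisfy $p,q \leq \sqrt{X}$, we have $\log p \log q \leq (\log X)^2/4$, and hence the prime-prime part of $r_2(n)$ is bounded by $\tfrac{(\log X)^2}{4} r_*(n)$. A direct count, using that $a = p^j$ with $j \geq 2$ forces $p \leq X^{1/4}$, bounds the remainder by $\sum_{n\leq X}(r_2(n) - r_2^{\flat}(n)) \ll X^{3/4}$, where $r_2^{\flat}$ denotes the prime-prime piece. Combined with Theorem~\ref{averaging_result}, this yields
\[
\sum_{n\in\CA(X)} r_*(n) \;\geq\; \frac{4}{(\log X)^2}\!\left( \frac{\pi}{4}\,\mathfrak{S}(b,g)\,\#\CA(X) + O_A\!\left(\frac{\#\CA(X)}{(\log X)^A}\right) - O(X^{3/4}) \right).
\]
Since $\#\CA(X) \asymp (g-1)^k = X^{\log(g-1)/\log g}$ and $\log(g-1)/\log g > 3/4$ for $g$ sufficiently large, the $X^{3/4}$ error is negligible. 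The Remark supplies the lower bound $\mathfrak{S}(b,g) \geq \tfrac{g}{g-1}(1 - 2^{\omega(g)}/\varphi(g))$, which stays bounded away from zero uniformly in $b$ because $\varphi(g) \gg g/\log\log g$ while $2^{\omega(g)} \leq g^{o(1)}$.

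For the upper bound, fix a small $\delta > 0$ and set $Y = X^\delta$. Split the count into the part $S_1$ with $\min(p,q) \geq Y$ and the part $S_2$ with $\min(p,q) < Y$. For $S_1$, the reverse inequality $1 \leq (\log p \log q)/(\log Y)^2$ gives
\[
S_1 \;\leq\; \frac{1}{\delta^2(\log X)^2} \sum_{n\in\CA(X)} r_2(n) \;\ll_\delta\; \frac{\#\CA(X)}{(\log X)^2},
\]
by Theorem~\ref{averaging_result}. For $S_2$, a trivial prime-counting bound gives $S_2 \ll \pi(Y)\,\pi(\sqrt{X}) \ll X^{1/2+\delta}/\log X$, which is $o(\#\CA(X)/(\log X)^2)$ as soon as $g$ is large enough that $\log(g-1)/\log g > 1/2 + \delta$.

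The only delicate step is the small-prime piece $S_2$: the argument rests on a trivial bound that succeeds only because the large-$g$ hypothesis of Theorem~\ref{averaging_result} forces $\#\CA(X)$ to have density $X^c$ with $c$ close to $1$. In any regime where $c \leq 1/2$, one would instead need a genuine count of primes $q \leq \sqrt{X}$ with $p^2 + q^2 \in \CA(X)$ for fixed small $p$, which is a harder problem. Beyond this accounting, the proof is simply a translation of the $r_2$ asymptotic into an $r_*$ estimate.
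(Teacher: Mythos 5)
Your proposal is correct and is essentially the paper's own argument: both sandwich $\sum_{n\in\CA(X)} r_*(n)$ via $\sum_{n\in\CA(X)} r_2(n)$ with logarithmic weights (the paper using the single cutoff $X^{1/4}$, you using prime powers for the lower bound and a cutoff $X^{\delta}$ for the upper bound), dispose of the exceptional pairs by trivial counting against $\#\CA(X)\asymp (g-1)^k$, and use Theorem \ref{averaging_result} together with the positivity of $\mathfrak{S}(b,g)$ for large $g$. The minor differences (your explicit treatment of prime powers and of the lower bound on $\mathfrak{S}(b,g)$) are just the "fine details" the paper leaves to the reader.
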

It is possible to get an asymptotic of the form
$$ \sum_{n\in \CA(X)} r_*(n) \sim \mathfrak{S}(b,g) \cdot \pi \frac{\#\CA(X)}{(\log X)^2}, $$
by partial summation, and obtaining an asymptotic to the more general problem
$$ \sum_{\substack{p \leq A, q\leq B \\ p^2+q^2 \in \CA(X)}} 1 , $$
for any $A,B\leq X$. However, we do not pursue this direction in the paper, as our current bounds are insufficient to yield an asymptotic result for Theorem \ref{anatomy_result}. Nonetheless, we expect that this can be proved with some modifications of the ideas in the paper.

We know from earlier results that $r_*(n)$ is $2$ almost surely whenever $r_*(n)>0$, \textit{i.e.}
$$ \#\{ n\leq x : r_*(n) > 0 \} \sim \frac{\pi}{2} \frac{ x}{(\log x)^2} . $$
We want to show a similar result when $n$ is in the set of missing digits. To show such a result, we need to study the second moment of $r_*(n)$ over $n\in \CA(X)$. An as immediate bound, we have
$$ \sum_{n\in \CA(X)} r_*^2(n) \ll_{\varepsilon} \frac{\#\CA(X)^{1+\varepsilon}}{(\log X)^2}. $$
Nonetheless, we can refine this bound by combining the machinery of the circle method with sieve estimates, which leads to the following theorem.
\begin{thm}\label{off-diagonal_result}
Fix $g\geq 2$ sufficiently large. We have as $k\to\infty$, for $X=g^k$,
$$ \sum_{n\in \CA(X)} (r_*^2(n) - 2r_*(n)) = \sum_{\substack{p_1^2+q_1^2=p_2^2+q_2^2 \in \CA(X) \\ \{p_1,q_1\}\not=\{p_2,q_2\}}} 1 \ll \frac{\#\CA(X) (\log\log X)^4}{\log X} . $$
\end{thm}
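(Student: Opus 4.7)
The plan is to reduce the off-diagonal count to a parameterized counting problem via unique factorization in the Gaussian integers, and then bound it by combining an upper-bound sieve for the four primality conditions with a Fourier (circle-method) treatment of the missing-digit constraint.

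Concretely, suppose $p_1^2+q_1^2=p_2^2+q_2^2=n$ with $\{p_1,q_1\}\neq\{p_2,q_2\}$. The two factorizations $n=(p_1+iq_1)(p_1-iq_1)=(p_2+iq_2)(p_2-iq_2)$ in $\Z[i]$ differ only by a regrouping of Gaussian prime factors, so unique factorization yields positive integers $a,b,c,d$ and Gaussian units $u_1,u_2$ with $p_1+iq_1=u_1(a+bi)(c+di)$ and $p_2+iq_2=u_2(a+bi)(c-di)$. This determines $(p_1,q_1,p_2,q_2)$ up to $O(1)$ multiplicity: the quadruple is a permutation of $(ac-bd,\ ad+bc,\ ac+bd,\ bc-ad)$, and $n=(a^2+b^2)(c^2+d^2)$. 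The off-diagonal hypothesis forces $abcd\neq 0$, and by the $(a,b)\leftrightarrow(c,d)$ symmetry I may assume $a^2+b^2\leq c^2+d^2$, hence $a^2+b^2\leq\sqrt{X}$.

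With $(a,b)$ fixed, the inner count is over $(c,d)$ subject to $(a^2+b^2)(c^2+d^2)\in\CA(X)$ and the primality of the four linear forms $L_1,\ldots,L_4\in\{ac\pm bd,\ ad\pm bc\}$ in $(c,d)$. I would apply a Selberg upper-bound sieve of dimension $4$ to the primality condition, replacing $\prod_{i=1}^{4}\mathbb{1}_{L_i\in\CP}$ by $\prod_i\big(\sum_{r_i\mid L_i}\lambda_{r_i}^+\big)$; each resulting term imposes a joint congruence condition on $(c,d)$ modulo $q=\lcm(r_1,r_2,r_3,r_4)\leq D$ for a sieve level $D$. The missing-digit constraint is then handled by the discrete Fourier expansion $1_{\CA}(n)=\sum_{h}\hat{1_{\CA}}(h)\,e(hn/X)$, so each sieve term becomes a sum over $h$ of $\hat{1_{\CA}}(h)$ against a quadratic exponential sum
\[
\sum_{\substack{(c,d)\in\CC \\ c^2+d^2\leq X/(a^2+b^2)}} e\!\left(\frac{h(a^2+b^2)(c^2+d^2)}{X}\right),
\]
where $\CC$ is the congruence class prescribed by the sieve moduli. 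The $h=0$ term contributes the main density $\sim \#\CA(X)/X$ times the sieved volume; nonzero frequencies are controlled by combining Maynard's Fourier decay for $\hat{1_{\CA}}$ with Gauss-sum estimates for the quadratic sum (after completing the square in $(c,d)$ modulo $q$).

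Finally, I would sum the resulting bound over $(a,b)$ with $a^2+b^2\leq\sqrt X$ using $\sum_{a^2+b^2\leq Y}1/(a^2+b^2)\ll\log Y$ and a uniform estimate for the sieve singular series associated to the four forms. The main obstacle I foresee is the trade-off between the sieve level $D$ and the range of $h$ on which the Fourier decay of $\hat{1_{\CA}}$ is effective: the sieve moduli must be compatible with the minor-arc estimate for $\hat{1_{\CA}}(h)$, and pushing $D$ up to realize the full dimension-$4$ sieve saving $(\log X)^{-4}$ forces $h$ into regimes where Maynard's Fourier decay is weakest; conversely, shrinking $D$ weakens the sieve by a factor of order $(\log\log X)^{4}$. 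Balancing these two constraints, together with the logarithmic loss from the singular-series sum over $(a,b)$, is what produces the bound $\#\CA(X)(\log\log X)^{4}/\log X$ in place of the naively expected $\#\CA(X)/(\log X)^{3}$ that would hold if the digit condition factored as a pure density.
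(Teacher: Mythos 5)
Your reduction is the same as the paper's (split $p_1^2+q_1^2=p_2^2+q_2^2$ in $\Z[i]$ into $(a^2+b^2)(c^2+d^2)$ with four prime linear forms, relax primality by an upper-bound sieve into congruence conditions on $(c,d)$, and treat the digit condition by Fourier expansion), but your minor-arc step has a genuine gap. With $(a,b)$ fixed you propose to bound $\sum_{(c,d)} e\big(h(a^2+b^2)(c^2+d^2)/X\big)$ by ``Gauss-sum estimates after completing the square modulo $q$''; completing modulo the sieve modulus $q$ says nothing on minor arcs, where the relevant Diophantine datum is the rational approximation to $(a^2+b^2)\,h/X$, not to $h/X$. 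For many $(a,b)$ (e.g.\ when $a^2+b^2$ cancels most of the denominator $s$ of the approximation to $h/X$) the quadratic sum in $(c,d)$ has essentially no cancellation, and Maynard's $L^\infty$ decay $X^{-c_g/\log s}$ (Lemma \ref{Linfty_bound_for_fourier_transform}) is useless once $s$ exceeds $\exp(O((\log X)^{1/2}))$. The paper's way around this is precisely \emph{not} to fix $(a,b)$: Lemma \ref{quadratic_exponential} is an averaged double quadratic exponential sum estimate over $n=a^2+b^2$ with divisor-bounded weights (Cauchy--Schwarz, Weyl differencing, then gluing $\ell=2n|h|q$), and this, combined with the hybrid large-sieve-type bound of Lemma \ref{hybrid_bound_for_fourier_transform}, is what controls the minor arcs. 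Your plan, which postpones the $(a,b)$-sum to the very end, has no substitute for this bilinear step.

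The quantitative accounting is also off. The sieve level is forced down to $H=\exp(O((\log X)^{1/2}))$ by the $L^\infty$ bound (see Remark \ref{rmk_on_L_infty_bound}), and a dimension-$4$ sieve at that level saves only $(\log H)^{-4}\asymp(\log X)^{-2}$; i.e.\ shrinking $D$ costs a factor $(\log X)^{2}$, not $(\log\log X)^{4}$ as you assert. The $(\log\log X)^{4}$ in the theorem comes instead from the crude uniform treatment of the local factors $\prod_{p\mid ab}$ in the sieve density, as in \eqref{sieve_upper_bound}, and the remaining $\log X$ in the denominator is $\sum_{a^2+b^2\le\sqrt X} (a^2+b^2)^{-1}\asymp\log X$ from the outer sum. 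With your attribution (sieve loss $(\log\log X)^4$ plus a logarithmic loss from the $(a,b)$-sum) the bookkeeping would give $\#\CA(X)(\log\log X)^4/(\log X)^3$ rather than the stated bound, so the balancing narrative needs to be replaced by the actual accounting above. A minor omission: you should first discard $n\le X^{3/4}$ (contributing $O_\varepsilon(X^{3/4+\varepsilon})$) so that $c^2+d^2\gg X^{1/4}$, which is what makes the exponential-sum estimate applicable with $\delta=1/8$.
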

\begin{remark}\label{off-diagonal-remark}
We actually expect the size of this set to be $\asymp \#\CA(X)/(\log X)^3$. Unfortunately, we lose a $(\log X)^2$ factor mainly because we can't take the major arcs thin enough. This stems from our failure to understand $L^{\infty}$ bounds for the Fourier transform of the indicator function of $\CA$. See Remark \ref{rmk_on_L_infty_bound} for more details.
\end{remark}
We can bound
$$ \#\{ n\in \CA(X) : r_*(n) > 2 \} \ll \frac{\#\CA(X)}{(\log X)^2}, $$
coming from Corollary \ref{main_cor}. Nonetheless, we can say more by using the following inequality,
$$ \#\{n\in \CA(X) : r_*(n) > 2\} \leq \frac{1}{3} \sum_{n\in \CA(X)} (r_*(n) - 2r_*(n)) +O_{\varepsilon}(X^{1/2+\varepsilon}),$$
since $1_{m>2} \leq m(m-2)/3$ for $m>1$, where the big $O$ term comes from the $r_*(n)=1$ contribution. Thus, as stated in Remark \ref{off-diagonal-remark}, we actually expect
$$ \#\{n\in \CA(X) : r_*(n) > 2\} \ll \frac{\#\CA(X)}{(\log X)^3}, $$
in other words, for a typical $n\in \CA(X)$ with $r_*(n)>0$, we have $r_*(n)=2$.

Finally, putting together everything, we will get a result on the quantity of the integers in $\CA$ that can be represented as a sum of two prime squares.
\begin{thm}\label{anatomy_result}
Fix $g\geq 2$ sufficiently large. We have as $k\to\infty$, for $X=g^k$,
$$ \frac{\#\CA(X)}{(\log X)^3 (\log\log X)^4} \ll \#\{ n\in \CA(X) : r_*(n) >0 \} \ll \frac{\#\CA(X)}{(\log X)^2}. $$
\end{thm}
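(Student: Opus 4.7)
The plan is to deduce both bounds from Corollary \ref{main_cor} and Theorem \ref{off-diagonal_result} via a standard Cauchy--Schwarz sandwich; no further analytic input is needed. The upper bound is immediate: every $n \in \CA(X)$ counted by $r_*(n) > 0$ contributes at least one to the first moment, so
$$\#\{ n \in \CA(X) : r_*(n) > 0 \} \leq \sum_{n \in \CA(X)} r_*(n) \ll \frac{\#\CA(X)}{(\log X)^2}$$
by Corollary \ref{main_cor}.

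For the lower bound, I would apply Cauchy--Schwarz to the first moment, restricted to the support of $r_*$:
$$\bigg( \sum_{n \in \CA(X)} r_*(n) \bigg)^2 \leq \#\{ n \in \CA(X) : r_*(n) > 0 \} \cdot \sum_{n \in \CA(X)} r_*^2(n).$$
The left-hand side is $\gg (\#\CA(X))^2 / (\log X)^4$, again by Corollary \ref{main_cor}. For the second moment on the right, I would use the decomposition
$$\sum_{n \in \CA(X)} r_*^2(n) = 2 \sum_{n \in \CA(X)} r_*(n) + \sum_{n \in \CA(X)} (r_*^2(n) - 2 r_*(n)).$$
The first piece is $\ll \#\CA(X)/(\log X)^2$ by Corollary \ref{main_cor}, while the second is $\ll \#\CA(X)(\log\log X)^4/\log X$ by Theorem \ref{off-diagonal_result}. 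The latter dominates, so the second moment is $\ll \#\CA(X)(\log\log X)^4/\log X$. Rearranging the Cauchy--Schwarz inequality then yields
$$\#\{ n \in \CA(X) : r_*(n) > 0 \} \gg \frac{(\#\CA(X))^2 / (\log X)^4}{\#\CA(X)(\log\log X)^4/\log X} = \frac{\#\CA(X)}{(\log X)^3 (\log\log X)^4},$$
as required.

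There is essentially no obstacle at this stage: Theorem \ref{anatomy_result} is assembled mechanically from the preceding first- and second-moment results, and the argument above is the classical ``almost-sure multiplicity at most $2$'' heuristic applied in the missing-digit setting. The $(\log X)(\log\log X)^4$ gap between the upper and lower bound is inherited directly from Theorem \ref{off-diagonal_result}; as noted in the remark there, closing this gap would require a stronger understanding of the $L^\infty$ behaviour of the Fourier transform of the indicator function of $\CA$, which is beyond the reach of the current circle-method treatment.
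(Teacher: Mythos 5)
Your proposal is correct and follows essentially the same route as the paper: the upper bound via the first moment from Corollary \ref{main_cor}, and the lower bound via Cauchy--Schwarz together with the decomposition of the second moment into the diagonal piece $2\sum_n r_*(n)$ and the off-diagonal piece controlled by Theorem \ref{off-diagonal_result}. Nothing further is needed.
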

As one would guess, we actually expect
$$ \#\{ n\in \CA(X) : r_*(n) >0 \} \sim \cdot \mathfrak{S}(b,g)\cdot  \frac{\pi}{2} \frac{\#\CA(X)}{(\log X)^2}, $$
where 
$$ \mathfrak{S}(b,g)=\frac{g}{g-1} \bigg( 1 - \frac{\rho(b;g)}{\varphi^2(g)} \bigg) ,$$
and $\rho(a;q)=\#\{ (u,v)\in (\Z/q\Z)^2 : (uv,q)=1,  u^2+v^2\equiv a \mod q \}$ are as before. Unfortunately, we can't accomplish this because of the loss of the $(\log X)^2$ factor in Theorem \ref{off-diagonal_result}.

Our theorems would also work in the case we are missing more digits. That is, for $0< t <g^{1/768 - \varepsilon}$, and $b_1,\cdots , b_t\in \{0,\cdots , g-1\}$, if we define the set $\CB:= \{n\in \N : n=\sum_{i=0}^{\infty} a_i g^i, a_i\not\in \{b_1,\cdots , b_t\} \}$, and similarly $\CB(X):=\CB \cap [1,X]$, then we can get similar asymptotics and bounds for Theorems \ref{averaging_result}, \ref{off-diagonal_result}, \ref{anatomy_result} and Corollary \ref{main_cor} with $\CB(X)$ instead of $\CA(X)$, and
$$ \mathfrak{S}(\vect{b},g) := \frac{g}{g-t} \bigg( 1  - \sum_{i=1}^t \frac{\rho(b_i ; g)}{\varphi^2(g)} \bigg), $$
instead of $\mathfrak{S}(b,g)$. These results would follow from the modifications mentioned in \cite{MR4452438} and playing around with other constants. The number $768=192\times 4$ comes from Lemma \ref{trigonometric_sum_over_primes}, more precisely \eqref{worst_bound}.

\begin{remark}
We can also prove similar results as Theorems \ref{averaging_result}, \ref{off-diagonal_result}, \ref{anatomy_result} and Corollary \ref{main_cor} for the functions
$$ r_1(n) := \sum_{a^2+b^2=n} \Lambda(a) ,$$
and
$$ \tilde{r_{*}}(n) := \#\{ (p,b)\in \CP\times \N : p^2+b^2=n \}. $$
The proof of these would be very similar with some minor modifications throughout. For example, as an asymptotic, we would get
$$ \sum_{n\in \CA(X)} r_1(n) = \tilde{\mathfrak{S}}(b,g) \frac{\pi}{4} \#\CA(X) + O_A\bigg( \frac{\#\CA(X)}{(\log X)^A} \bigg) , $$
where
$$  \tilde{\mathfrak{S}}(b,g) := \frac{g}{g-1} \bigg( 1  - \frac{\tilde{\rho}(b;g)}{g\cdot \varphi(g)}\bigg), $$
and $\tilde{\rho}(a;q):=\#\{(u,v)\in (\Z/q\Z)^2 : (u,q)=1 , u^2+v^2\equiv a \mod q \}$. Also, we would get
$$ \sum_{n\in \CA(X)} (\tilde{r_{*}}^2(n) - \tilde{r_{*}}(n)) \ll \#\CA(X) (\log\log X)^2 , $$
and
$$ \frac{\#\CA(X)}{(\log X)^2 (\log\log X)^2} \ll \#\{n\in \CA(X) : \tilde{r_{*}}(n) >0\} \ll \frac{\#\CA(X)}{\log X}. $$
However, in this case, obtaining an asymptotic for the size of this set is more difficult and would require studying the second moment of $\tilde{r_{*}}(n)$ over $n\in \CA(X)$ with a fixed number of prime factors. See \cite{MR1833070, MR4772301} for more information on $\tilde{r_{*}}(n)$ and $\tilde{\rho}(a;q)$.
\end{remark}

\subsection{Idea of the Proof}
To prove Theorem \ref{averaging_result}, we use the circle method as was used in the paper of Maynard \cite{MR4452438} to study primes with missing digits. Major arcs are defined by
$$ \bigcup_{\substack{s\leq (\log X)^B \\ (r,s)=1}} \bigg[ \frac{r}{s} - \frac{(\log X)^B}{X} , \frac{r}{s} + \frac{(\log X)^B}{X} \bigg] . $$
Most of the analysis for major arcs is similar except for getting the local factor, which requires understanding the prime factorization of the base $g$. For the minor arcs, we need to understand sums of the form
$$ \sum_{p\leq y} e(\alpha p^2) ,$$
for $\alpha$ away from rationals with small denominator and $y\leq \sqrt{X}$. We can get cancellations for these sums, see Lemma \ref{trigonometric_sum_over_primes}.

For the proof of Theorem \ref{off-diagonal_result}, we use an idea of the author \cite{MR4756119} to split $p_1^2+q_1^2=p_2^2+q_2^2$ in $\Z[i]$ as $(a^2+b^2)(c^2+d^2)$ with some linear forms in $a,b,c,d$ being prime. Then we can go from primes to very rough integers. We will take as rough as $\exp(C(\log X)^{1/2})$ for some constant $C_g>0$ depending on $g$. For these integers, we use an upper bound sieve to get rid of the roughness condition, see Lemma \ref{sieve_lemma}. At the end, we get a sum of the form
$$ \sum_{\substack{X^{3/4}/(a^2+b^2) < c^2+d^2 \leq X/(a^2+b^2) \\ (a^2+b^2)(c^2+d^2)\in \CA \\ (c,d)\equiv (v_1,v_2)\mod h}} 1, $$ 
for some $h\leq H$ with $H=\exp(C_g(\log X)^{1/2})$. Then we can use the circle method machinery to understand this sum. In this case, we take major arcs a bit thinner, more precisely
$$ \bigcup_{\substack{s\leq \exp(\eta_g(\log X)^{1/2}) \\ (r,s)=1}} \bigg[ \frac{r}{s} - \frac{\exp(\eta_g(\log X)^{1/2})}{X} , \frac{r}{s} + \frac{\exp(\eta_g(\log X)^{1/2})}{X} \bigg] ,$$
for some $\eta_g>0$ that depends on $g$ and $C_g$. However, in this case, the local factor is more intricate than in the first part. Since we are only concerned with establishing an upper bound, many of these complications can be safely ignored. For minor arcs, we leverage the extra average over $a^2+b^2$. Essentially, we prove an averaged double quadratic exponential sum estimate, see Lemma \ref{quadratic_exponential}, which gives us what we want.

Lastly, Theorem \ref{anatomy_result} follows from Corollary \ref{main_cor} and Theorem \ref{off-diagonal_result} by the Cauchy-Schwarz inequality and Markov's inequality,
$$ \bigg(\sum_{n\in\CA(X)} r_*(n) \bigg)^2 \bigg/ \bigg( \sum_{n\in \CA(X)} r_*^2(n)\bigg) \leq \#\{ n\in \CA(X) : r_*(n)>0\} \leq \sum_{n\in \CA(X)} r_*(n) ,$$
and splitting the second moment into diagonal and off-diagonal contributions.

Getting the corollaries are easy in comparison, and we leave the fine details to the reader. To get Corollary \ref{main_cor}, we use the following inequality
$$ (\log X^{1/4})^2 \sum_{\substack{p^2+q^2 \in \CA(X) \\ p,q>X^{1/4}}} 1 < \sum_{n\in \CA(X)} r_2(n) \leq (\log \sqrt{X})^2 \sum_{n\in \CA(X)} r_*(n). $$
Next, we note that
$$ \sum_{\substack{p^2+q^2 \in \CA(X) \\ p,q>X^{1/4}}} 1 = \sum_{n\in \CA(X)} r_*(n) + O(X^{3/4}), $$
which proves the result.

\textbf{Notation:} We use the usual asymptotic notation of Vinogradov. By $(a,b)\equiv (u,v) \mod q$ we mean $a\equiv u \mod q$ and $b\equiv v \mod q$. Throughout the paper, $g$ is the base sufficiently large and $b$ is the digit we are missing. We have $X:=g^k$ with $k$ an integer going to infinity. We use $(a,b)=1$ to mean that $a$ and $b$ are coprime, and use $[a,b]$ for the least common multiple of $a$ and $b$. All our bounds depend on the base $g$ which we only mention to remind the reader when necessary. We also use $\ll_A (\log X)^{-A}$ or $O_A((\log X)^{-A})$ to mean that such a bound holds for any $A$ with a constant depending on $A$, and similarly $\ll_{\varepsilon} X^{\varepsilon}$. In Section \ref{average_section}, we will have a fixed $B$ that is sufficiently large which will take the place of $A$ to not confuse the appearance of the other $A$'s. In Section \ref{off_diagonal_section}, we will use $\mathbb{P}$ to be the set of primes coprime to $3g$.

\textbf{Acknowledgements.}
The author is grateful to Fei Xue for his discussion concerning missing digits and explaining some basic ideas behind them. He would also like to thank Valentin Blomer, Pieter Moree, Kunjakanan Nath, and Alisa Sedunova for their comments and suggestions. Furthermore, he wishes to thank Jeremy Schlitt for reading an earlier version of the manuscript.

\section{Preliminary Lemmas}

\subsection{Elementary Results}

We start by studying the properties of the $\rho(a;q)$ function.
\begin{lem}\label{rho_function_properties}
We have for $p\geq 3$,
$$ \rho(\nu ; p) = \begin{cases} 
      (1+\chi_4(p))(p-1) & \nu\equiv 0 \mod p \\
      p-1-\chi_4(p)-\big(\frac{\nu}{p}\big) & \nu\not\equiv 0\mod p
   \end{cases} $$
and $\rho(\nu;p^a) = p^{a-1} \rho(\nu; p)$ for $a\geq 1$, where it is understood $\rho(\nu;p)$ refers to $\rho(\nu\mod p;p)$. In the case $p=2$, we have $\rho(\nu;2)= 1_{\nu\equiv 0\mod 2}$, $\rho(\nu;4)=1_{\nu\equiv 2 \mod 4}\cdot 4$ and $\rho(\nu;2^a)= 2^{a-3} \rho(\nu; 8)=1_{\nu\equiv 2 \mod 8}\cdot 2^{a+1}$ for $a\geq 3$.
\end{lem}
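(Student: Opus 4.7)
The plan is to reduce to prime-power moduli by multiplicativity, handle odd primes via character sums and Hensel lifting, and treat $p=2$ by direct enumeration together with a fibre-counting argument on units; the main obstacle will be the last step, since standard Hensel lifting breaks down in characteristic $2$. First I would observe that $\rho(\nu;\cdot)$ is multiplicative: for coprime $q_1,q_2$, the Chinese Remainder Theorem splits both $u^2+v^2\equiv\nu\pmod{q_1q_2}$ and $(uv,q_1q_2)=1$ coordinatewise, giving $\rho(\nu;q_1q_2)=\rho(\nu;q_1)\rho(\nu;q_2)$. This reduces everything to prime-power moduli.

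For an odd prime $p$ and $a=1$, I would compute the unrestricted count $N(\nu):=\#\{(u,v)\in(\Z/p\Z)^2 : u^2+v^2\equiv\nu\}$ using the Legendre symbol $\chi=(\cdot/p)$ with the convention $\chi(0):=0$. Since $\#\{v : v^2\equiv w\}=1+\chi(w)$, one has $N(\nu)=p+\sum_u\chi(\nu-u^2)$, and the standard evaluation of $\sum_u\chi(au^2+bu+c)$ in terms of its discriminant gives $N(\nu)=p-\chi_4(p)$ for $\nu\not\equiv 0\pmod p$ and $N(0)=p+(p-1)\chi_4(p)$. To recover $\rho(\nu;p)$ I would subtract the $1+\chi(\nu)$ pairs with $u\equiv 0$ and the $1+\chi(\nu)$ pairs with $v\equiv 0$, adding back $(0,0)$ only when $\nu\equiv 0\pmod p$; a case split on $p\bmod 4$ and on whether $\nu$ is a square then produces the stated closed form. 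For $a\geq 2$, I would Hensel-lift: writing $u=u_0+pu_1$, $v=v_0+pv_1$ and expanding $u^2+v^2\pmod{p^a}$, the constraint reduces, after cancelling a factor of $p$, to an equation in $u_1$ with linear coefficient $2u_0\in(\Z/p^{a-1}\Z)^\times$ (using $p$ odd and $(u_0,p)=1$). Hensel's lemma then yields a unique $u_1$ for each $v_1\in\Z/p^{a-1}\Z$, so each mod-$p$ solution lifts to exactly $p^{a-1}$ solutions mod $p^a$, giving $\rho(\nu;p^a)=p^{a-1}\rho(\nu;p)$.

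The prime $p=2$ is the hard case, because $2u_0$ is no longer a unit and the naive Hensel argument fails. For $a\in\{1,2,3\}$ I would enumerate directly, using that units mod $2^a$ square to $1\pmod 8$, which forces $u^2+v^2\equiv 2\pmod 8$ for odd $u,v$ and pins down both the support and the value of $\rho(\nu;2^a)$. For $a\geq 4$ I would instead show that the map $(u,v)\mapsto u^2+v^2$ from $((\Z/2^a\Z)^\times)^2$ onto the coset $\{\nu\in\Z/2^a\Z : \nu\equiv 2\pmod 8\}$ has equal-size fibres, either by induction on $a$ using the size-$2$ kernel of the reduction $(\Z/2^a\Z)^\times\to(\Z/2^{a-1}\Z)^\times$, or by exploiting that $u^2+v^2=N(u+iv)$ in $\Z[i]$. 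This yields $\rho(\nu;2^a)=2^{2(a-1)}/2^{a-3}=2^{a+1}$ on the support, matching $2^{a-3}\rho(\nu;8)$. The delicate point is that individual mod-$2^{a-1}$ solutions may lift many times or not at all, so the uniform fibre size only emerges after averaging; this is where the Taylor expansion of the quadratic form must be tracked to second order in $2$ rather than first, which is the main technical hurdle of the lemma.
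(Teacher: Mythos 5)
Your overall route is essentially the paper's: the case $a=1$, $p$ odd, by complete character sums; the lift to $p^a$ using that $2u_0$ is a unit (the paper does exactly this, writing the lift as $(u+p^{a-1}k,\,v+p^{a-1}\ell)$ and counting the $p$ admissible pairs $(k,\ell)$, which is your Hensel step made explicit); and $p=2$ with the moduli $2,4,8$ checked by hand. The one structural difference is at $2^a$, $a\geq 4$: the paper simply quotes the lifting lemma of \cite{MR1833070}, while you sketch an equal-fibre argument; the claim is true and either of your two suggested routes (induction on $a$, or norms from $\Z[i]$) can be made to work, but as you acknowledge this step is only sketched and would still have to be written out. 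The opening multiplicativity remark is fine but not needed, since the lemma only concerns prime-power moduli.

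The concrete problem is in your $a=1$ computation: carried out as you describe, the subtraction of the $u\equiv 0$ and $v\equiv 0$ pairs gives, for $\nu\not\equiv 0 \mod p$,
\begin{equation*}
N(\nu)-2\Big(1+\Big(\tfrac{\nu}{p}\Big)\Big)\;=\;p-2-\chi_4(p)-2\Big(\tfrac{\nu}{p}\Big),
\end{equation*}
which does not agree with the displayed expression $p-1-\chi_4(p)-\big(\tfrac{\nu}{p}\big)$ unless $\nu$ is a non-residue, so your final assertion that the case split ``produces the stated closed form'' cannot be carried out as written. For instance, for $p=5$, $\nu=1$ a direct check gives $\rho(1;5)=0$ (no pair of units mod $5$ has $u^2+v^2\equiv 1$), while the displayed formula gives $2$. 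In fact the displayed expression is the count in which only $(u,p)=1$ is imposed, \emph{i.e.} the $\tilde{\rho}$ of the introduction: summing it over $\nu \bmod p$ yields $p(p-1)$, whereas $\sum_{\nu}\rho(\nu;p)=(p-1)^2$, which your formula does satisfy. So either your formula is the correct one for $\rho$ as defined (and the display must be reconciled with it), or the quantity intended is $\tilde{\rho}$; in either case the discrepancy has to be resolved explicitly rather than absorbed into the ``case split on $p\bmod 4$ and on whether $\nu$ is a square,'' which is the step of your proposal that would fail. The rest of your argument (the Hensel lift for odd $p$, and the $2$-adic values, which are correct as stated) is sound.
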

\begin{proof}
This is \cite[Lemma 15]{MR616225}, but we give a short proof here. The proof of the first part comes by classical estimates on the sums of Legendre symbols. To lift to prime powers, fix a $\nu \mod{p^a}$, for a given solution $(u,v)\in (\Z/p^{a-1}\Z)^2$ with $u^2+v^2\equiv \nu \mod{p^{a-1}}$, then we can put $(u+p^{a-1}k,v+p^{a-1}\ell)$ for $0\leq k,\ell<p $. Hence, we have
$$ (u+p^{a-1}k)^2+(u+p^{a-1}\ell)^2 \equiv u^2+v^2 +2 p^{a-1}(k+\ell) \mod{p^a}, $$
and this is $\equiv \nu \mod{p^a}$ if and only if $2(k+\ell) \equiv (\nu-u^2-v^2)/p^{a-1} \mod p$. Therefore, we only have $p$ choices for the pairs $(k,\ell)$. Now, we have $(u_1+p^{a-1}k_1,v_1+p^{a-1}\ell_1)=(u_2+p^{a-1}k_2,v_2+p^{a-1}\ell_2)$ if and only if $u_1\equiv u_2, v_1\equiv v_2, k_1\equiv k_2, \ell_1\equiv \ell_2 \mod{p^{a-1}}$ since $(u_1v_1u_2v_2, p)=1$.
For the powers of $2$, we check by hand $2, 4, 8$ and we know if $\nu\not\equiv 2 \mod 8$, then $\rho(\nu;2^a)=0$ for all $a\geq 3$. To lift $\nu\equiv 2 \mod 8$, we refer to \cite[Lemma]{MR1833070} since both functions are the same in this case.
\end{proof}
Next, we have a lemma on behaviour of the $r_2(n)$ function in short intervals.
\begin{lem}\label{prime_number_theorem_in_short_int}
Let $X$ be a sufficiently large number, and fix $A>0$. We have for $t\leq X$,
$$ \sum_{t<n\leq t + X/(\log X)^A} r_2(n) = \frac{\pi}{4} \frac{X}{(\log X)^A} + O\bigg( \frac{X}{(\log X)^{10A}} \bigg). $$
\end{lem}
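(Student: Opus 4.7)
The plan is to deduce this short-interval estimate from the long-interval prime number theorem for $r_2$ that is already quoted in the introduction. Setting $H:=X/(\log X)^{A}$ and $\CR(y):=\sum_{n\leq y}r_2(n)$, I would write
$$ \sum_{t<n\leq t+H}r_2(n) \;=\; \CR(t+H)-\CR(t) $$
and apply to each piece the $q=1$ case of the Siegel--Walfisz-type identity recalled in the introduction. Since $\rho(0;1)=\varphi(1)=1$, this specializes to: for every fixed $C>0$ and $y\geq 2$,
$$ \CR(y) \;=\; \frac{\pi}{4}\,y \;+\; O_C\!\pthlr{\frac{y}{(\log y)^C}}. $$

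Taking $C:=10A$ and subtracting produces the main term $\tfrac{\pi}{4}H$ together with two error contributions of the shape $O_A\!\pthlr{y/(\log y)^{10A}}$ evaluated at $y\in\setlr{t,\,t+H}$. In the main range $X^{1/2}\leq t\leq X$ one has $\log y\geq \tfrac12\log X$ for both choices of $y$, so each error is $\ll_A X/(\log X)^{10A}$, which is what is required. In the remaining range $0\leq t<X^{1/2}$, the endpoint $t$ contributes at most $\CR(t)\ll t(\log t)^2 \ll X^{1/2}(\log X)^2$ via the trivial majorization $r_2(n)\leq (\log n)^2 r(n)$ combined with the circle estimate $\sum_{n\leq t}r(n)\ll t$, which is negligible; the endpoint $t+H$ is handled by the PNT itself since $t+H\asymp H$ and $\log(t+H)\asymp \log X$. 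In either regime the total error is comfortably absorbed into $O_A(X/(\log X)^{10A})$.

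There is essentially no genuine obstacle: the lemma is a routine corollary of a theorem already invoked in the introduction. The only points that require care are choosing the exponent $C$ large enough ($C\geq 10A$ will do) and separating the small-$t$ regime so that the $(\log t)^C$ in the denominator does not become useless. For completeness I note that the cited PNT for $\CR$ is classical: one fixes $a\leq\sqrt{y}$, applies Siegel--Walfisz for $\Lambda$ on the range $b\leq\sqrt{y-a^2}$, and sums over $a$ by partial summation, with the integral $\int_0^{\sqrt{y}}\sqrt{y-a^2}\,da=\pi y/4$ producing the constant $\pi/4$.
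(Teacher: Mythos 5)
Your proof is correct, but it takes a different route from the paper. You deduce the lemma by differencing the long-range asymptotic $\sum_{n\leq y} r_2(n)=\frac{\pi}{4}y+O_C\big(y/(\log y)^C\big)$ (the $q=1$ case of the formula quoted in the introduction) at $y=t+H$ and $y=t$, with $C=10A$; this works precisely because the interval length $H=X/(\log X)^A$ dominates the attainable long-range error $X/(\log X)^{10A}$, and your split at $t=X^{1/2}$, with the trivial bound $\sum_{n\leq t}r_2(n)\ll X^{1/2}(\log X)^2$, correctly disposes of the regime where $\log t$ is not comparable to $\log X$. The paper instead proves the short-interval statement directly: it discards the region $\min\{a,b\}\leq t^{1/4}$ trivially, covers the annulus $t<x^2+y^2\leq t+H$ (for $t>\sqrt X$) by a smooth dyadic partition of unity into $O((\log X)^2)$ boxes, and applies the prime number theorem in (weakly) short intervals together with partial summation in each box, recovering $\frac{\pi}{4}H$ as the area of the annulus. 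Your argument is shorter, but it rests on the introduction's asymptotic, which the paper states without proof; your closing sketch of that asymptotic is the standard one, though you should note the uniformity issue when $y-a^2$ is small (only $O(1)$ values of $a$ are affected, so it is trivial). The paper's geometric decomposition is self-contained modulo the classical PNT and, unlike plain differencing, would still be the right tool if the interval were shorter than the long-range error term allows; for the present lemma, however, your simpler deduction suffices.
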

\begin{proof}
To prove this, we first assume $t\leq \sqrt{X}$, then we  have
$$ \sum_{t<n\leq t + X/(\log X)^A} r_2(n)= \sum_{n\leq t + X/(\log X)^A} r_2(n) + \sum_{n\leq t} r_2(n), $$
for which, the claim holds trivially.
So, we can assume $t>\sqrt{X}$. We first take care of the case when $n=a^2+b^2$ has $\min\{a,b\} \leq t^{1/4}$. In this case, we trivially have
$$ \sum_{\substack{t<a^2+b^2<t+X/(\log X)^A \\ \min\{a,b\}\leq t^{1/4}}} \Lambda(a)\Lambda(b) \ll_{\varepsilon} X^{1/2}t^{1/4+\varepsilon} \leq X^{3/4+\varepsilon},  $$
since $\#\{a^2+b^2=n\} \ll n^{\varepsilon}$. We focus on the annulus $t < x^2+y^2 \leq t+X/(\log X)^A$ with $\min\{x,y\}>t^{1/4}$, and split this region into a dyadic partition of squares. Let $W$ be a smooth non-negative function supported on $[1,2]$ such that
$$ \sum_{M_1,M_2} W\bigg( \frac{x}{M_1} \bigg)W\bigg( \frac{y}{M_2} \bigg) = 1, $$
where $M_1,M_2\geq 1$ run over a sequence of positive real numbers such that $t\leq M_1^2+M_2^2 \leq t + X/(\log X)^A$, and $\min\{M_1,M_2\} > t^{1/4}$ with  $\#\{M_1,M_2\} \ll (\log X)^2$.
Then we have by the prime number theorem in short intervals and partial summation,
\begin{align*}
\sum_{M_1,M_2} \sum_{m_1,m_2} W\bigg( \frac{m_1}{M_1} \bigg)W\bigg( \frac{m_2}{M_2} \bigg)  &\Lambda(m_1) \Lambda(m_2) = \sum_{M_1,M_2} \int_{\R} W\bigg( \frac{x}{M_1} \bigg) dx \int_{\R}W\bigg( \frac{y}{M_2} \bigg) dy   \\
&+ O\bigg( \sum_{t<M_1^2+M_2^2<t+X/(\log X)^A} \frac{M_1M_2}{\min\{(\log M_1)^{50A} , (\log M_2)^{50A}\}} \bigg),
\end{align*}
where we used $W( u /M ) \ll 1$ for $M\leq u \leq 2M$.
Since we have $\min\{M_1,M_2\} > t^{1/4} \gg X^{1/8}$ since $t>\sqrt{X}$. Using this, we get that the error term is $\ll X/(\log X)^{10A}$ since $t\leq X$. For the main term, we have by changing the sum and the integrals,
$$ \sum_{M_1,M_2} \int_{\R^2} W\bigg( \frac{x}{M_1} \bigg) W\bigg( \frac{y}{M_2} \bigg) dxdy  = \int_{\substack{t<x^2+y^2 <t+X/(\log X)^A \\ \min\{x,y\} > t^{1/4}}} 1 dxdy = \frac{\pi}{4}\frac{X}{(\log X)^A} + O_{\varepsilon}(X^{3/4+\varepsilon}).$$
\end{proof}

The following is an elementary lemma that we give without the proof.
\begin{lem}\label{sum_of_squares_in_short_int_and_aps}
Let $x$ be a sufficiently large number and take $q\leq x^{\varepsilon}$. We have for $0 < u,v \leq q$,
$$ \sum_{\substack{a^2+b^2\leq x \\ (a,b)\equiv (u,v)\mod q}} 1 = \frac{\pi}{4} \frac{x}{q^2}+O\bigg( \frac{\sqrt{x}}{q} \bigg) .$$
\end{lem}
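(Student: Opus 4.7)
The plan is to reduce the sum to a standard lattice point count in a shifted quarter disk and then invoke the Gauss circle problem. Since $0 < u, v \leq q$, every positive integer $a$ with $a \equiv u \mod{q}$ admits a unique representation $a = u + qm$ with $m \in \Z_{\geq 0}$, and similarly $b = v + qn$ with $n \in \Z_{\geq 0}$. Under this substitution the condition $a^2 + b^2 \leq x$ becomes $(qm+u)^2 + (qn+v)^2 \leq x$, so the quantity we want to evaluate equals the number of lattice points $(m,n) \in \Z_{\geq 0}^2$ lying in the planar region
$$ \Omega := \{(y, z) \in \R^2 : y \geq 0,\ z \geq 0,\ (qy+u)^2 + (qz+v)^2 \leq x\}. $$

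First I would compute the area of $\Omega$. Making the change of variables $s = qy + u$, $t = qz + v$ gives
$$ \meas(\Omega) = \frac{1}{q^2} \meas\{(s,t) : s \geq u,\ t \geq v,\ s^2 + t^2 \leq x\}. $$
Removing the two axis-adjacent strips of widths $u, v \leq q$ from the full quarter disk $\{s, t \geq 0,\ s^2 + t^2 \leq x\}$ of area $\pi x / 4$ changes the area by at most $O(q \sqrt{x})$, so $\meas(\Omega) = \pi x / (4 q^2) + O(\sqrt{x}/q)$.

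Finally I would compare the lattice point count in $\Omega$ to its area via a standard boundary-thickening argument: for any bounded convex planar region the number of integer points in it differs from its area by $O(P + 1)$, where $P$ is the length of the boundary. The boundary of our $\Omega$ is composed of two axis segments of length at most $\sqrt{x}/q$ and a circular arc of radius $\sqrt{x}/q$, hence has total length $O(\sqrt{x}/q + 1)$. Since $q \leq x^{\varepsilon}$ forces $\sqrt{x}/q \to \infty$, this discrepancy is absorbed into the claimed error term, completing the sketch. There is no substantive obstacle here; the only item needing care is to verify that both the axis-strip contribution to the area and the boundary contribution to the lattice count fit within $O(\sqrt{x}/q)$, which they do precisely because $u, v \leq q$.
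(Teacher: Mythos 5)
Your argument is correct: the paper states this lemma without proof (explicitly calling it elementary), and your reduction to a lattice-point count in the convex region $\Omega$, together with the area computation and the standard ``lattice points $=$ area $+O(\text{perimeter}+1)$'' bound, is precisely the kind of routine argument intended, with every error term fitting inside $O(\sqrt{x}/q)$ since $q\leq x^{\varepsilon}$. No gaps to report; one could alternatively count row by row (for each residue class of $a$ the admissible $b$ form an interval of length $\sqrt{x-a^2}/q+O(1)$), but your version is equally clean.
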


\subsection{Exponential Sum Estimates}
We start with the following result on exponential sums over squares of primes. Similar estimates appeared in \cite{MR1829558} to get results on Waring's problem in higher powers.
\begin{lem}\label{trigonometric_sum_over_primes}
Let $\alpha=\frac{r}{s} +\xi$ with $(r,s)=1$, $s\leq N^{3/4}$, and $s|\xi|<1/N^{3/4}$. Then we have uniformly for $x\leq \sqrt{N}$ and $s \leq N^{1/8}$,
\begin{equation*}
\sum_{n\leq x} \Lambda(n) e(\alpha n^2) \ll_{\varepsilon} \bigg( N^{2/5+\varepsilon} + \frac{1}{s^{1/2-\varepsilon}|\xi|^{1/2}} \bigg) (\log N)^c , \tag{i}
\end{equation*}
for some absolute constant $c>0$, and for $s\geq (\log N)^{3073}$, we have
\begin{equation*}
\sum_{n\leq x} \Lambda(n) e(\alpha n^2) \ll N (\log N) \bigg( \frac{1}{s} + \frac{1}{N^{1/3}}+ \frac{s}{N^2}  \bigg)^{\frac{1}{192}} . \tag{ii}
\end{equation*}
\end{lem}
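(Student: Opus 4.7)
The plan is to apply a combinatorial identity to the von Mangoldt weight and reduce to bilinear exponential sums with the quadratic phase $e(\alpha n^2)$, which are then bounded by Weyl differencing using the rational approximation $\alpha = r/s + \xi$. The central analytic input is Weyl's inequality for a quadratic polynomial: whenever $\beta = a/q + \eta$ with $(a,q)=1$,
\[
\sum_{n \leq N'} e(\beta n^2) \ll N'^{1+\varepsilon} \Big( \tfrac{1}{q} + \tfrac{1}{N'} + \tfrac{q}{N'^2} \Big)^{1/2},
\]
and for bilinear sums of shape $\sum_m \sum_n a_m b_n e(\alpha m^2 n^2)$ the same kind of bound holds after Cauchy--Schwarz in the outer variable and one Weyl differencing in the inner variable.

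For (i), I would apply Vaughan's identity balanced at $U = V = N^{2/5}$, which is what ultimately forces the exponent $2/5$. This splits $\sum_{n \leq x} \Lambda(n) e(\alpha n^2)$ into a trivially bounded short piece, Type I sums $\sum_{m \leq U} a_m \sum_n e(\alpha m^2 n^2)$, and a single Type II sum over $U < m \leq x/V$ and $n \geq V$. For each Type I sum, the inner quadratic sum in $n$ has effective modulus $s/(s, m^2)$ after reducing the rational part of $\alpha m^2$; applying partial summation and Weyl's inequality and summing over $m$ against a divisor bound, the dominant contribution comes from the smoothly oscillating Fresnel-type piece of scale $1/\sqrt{|\xi|}$, producing the asserted term $1/(s^{1/2-\varepsilon}|\xi|^{1/2})$. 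This is essentially the ``Gauss sum times Fresnel integral'' main term that one expects on a major arc. The Type II sum is handled by Cauchy--Schwarz in $m$ followed by a single Weyl differencing in $n$, reducing it to an average $\sum_{m, h} \min(N, \|2\alpha m^2 h\|^{-1})$; balancing against the hypotheses $x \leq \sqrt{N}$ and $s \leq N^{1/8}$ yields the remaining error $N^{2/5 + \varepsilon}$.

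For (ii), the uniform shape of the bound together with the exponent $1/192$ point to Heath-Brown's identity of low level in place of Vaughan's, producing $O(1)$ bilinear pieces whose variable ranges can be chosen flexibly. Each piece is estimated either as a Type I sum (directly by Weyl's inequality, with modulus a divisor of $s$) or as a Type II sum (by Cauchy--Schwarz plus Weyl differencing). Optimizing the splitting parameter across the worst Type I or Type II range, together with the compounded $1/2^j$ losses from successive Cauchy--Schwarz steps, yields the claimed power saving of $(1/s + 1/N^{1/3} + s/N^2)^{1/192}$ over the trivial bound. The main obstacle in both parts is the Type II analysis: after Weyl differencing, the effective modulus of the resulting linear exponential sum is $s/(s, 2m^2 h)$ rather than $s$ itself, so one must average carefully over $m$ and $h$ using a divisor function bound and the standard estimate for $\sum_k \min(N, \|k \beta\|^{-1})$ under a rational approximation to $\beta$. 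The hypothesis $s \geq (\log N)^{3073}$ in (ii) is precisely what guarantees that these power savings dominate the logarithmic losses accumulated through the identity and the repeated Cauchy--Schwarz steps, while the hypothesis $s|\xi| < 1/N^{3/4}$ in both parts fixes the scale of the Fresnel-type contribution that produces the $1/(s^{1/2-\varepsilon}|\xi|^{1/2})$ term in (i).
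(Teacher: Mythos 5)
Your outline is not what the paper does, and as written it has real gaps. The paper does not prove this lemma from scratch at all: part (ii) is quoted directly from Vinogradov, and part (i) is deduced from \cite[Theorem 2]{MR2252758} (Kumchev's estimate for $\sum_{p\sim P}e(\alpha p^2)$) after a short reduction — bound $x\le N^{2/5}$ trivially, split $x^{5/8}<p\le x$ into dyadic ranges $P<p\le 2P$ with $N^{1/4}<P\le\sqrt N$, check that $s\le N^{1/8}<P^{1/2}$ and $|s\alpha-r|\le 1/\sqrt N<1/P^{3/2}$ so the cited theorem applies, and absorb $n\le x^{5/8}\ll N^{5/16}$ into the $N^{2/5+\varepsilon}$ term. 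Your plan is to reprove the underlying exponential-sum estimates, which is legitimate in principle, but the quantitative heart of the argument is asserted rather than carried out, and the one concrete choice you make fails: with $x\le\sqrt N$ and Vaughan parameters $U=V=N^{2/5}$ you have $UV=N^{4/5}\ge x$, so the Type II range $U<m\le x/V$ is empty and the Type I sums have inner variable of length $x/d$ which drops to $O(1)$, giving no cancellation — the decomposition is vacuous. Even with the sensible normalization $U=V=x^{2/5}$, Cauchy--Schwarz in one variable followed by a single Weyl differencing does \emph{not} give ``the same kind of bound'' as Weyl's inequality for the bilinear sums $\sum_m\sum_n a_mb_n e(\alpha m^2n^2)$: each Cauchy--Schwarz halves the saving, which is exactly why the classical Ghosh-type bound for $\sum_{n\le x}\Lambda(n)e(\alpha n^2)$ carries the exponent $1/4$ of the Weyl bracket rather than $1/2$, and that is not strong enough to produce $N^{2/5+\varepsilon}+s^{-1/2+\varepsilon}|\xi|^{-1/2}$. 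Reaching that shape requires the finer Type I analysis (expanding the complete quadratic Gauss sum to modulus $s/(s,m^2)$ against the oscillatory integral of length $\min(x,|\xi|^{-1/2})$, then averaging the error over $m$) together with a sharper Type II treatment — precisely the content of the theorem the paper cites, not something that falls out of ``balancing'' a plain Weyl differencing.

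For (ii) the situation is similar: you claim that a Heath--Brown decomposition plus Cauchy--Schwarz and Weyl differencing ``yields the claimed power saving $(1/s+1/N^{1/3}+s/N^2)^{1/192}$,'' but no computation is given, and the specific numerology ($1/192$, the threshold $s\ge(\log N)^{3073}$) is not generic output of such an optimization — it is Vinogradov's classical estimate, which the paper simply quotes. This matters downstream: the paper uses the exponent $192$ explicitly (the $768=192\times4$ threshold for the number of excluded digits and the exponents in the minor-arc bound \eqref{worst_bound}), so ``some power saving'' does not substitute for the stated inequality unless you verify that your exponent is at least as strong. As it stands, the proposal identifies the right family of techniques but does not constitute a proof of either part; the honest options are to cite the two results as the paper does, or to actually execute the Type I/Type II analysis with correct parameter ranges and track the exponents.
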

\begin{proof}
See Vinogradov \cite{zbMATH02517953} for the latter inequality. For the first inequality, we trivially bound when $x\leq N^{2/5}$ which is negligible. In the case $x> N^{2/5}$, we use \cite[Theorem 2]{MR2252758} by splitting $x^{5/8}<p\leq x$ into dyadic ranges $P<p\leq 2P$ and with $Q=P^{1/2}$. Note that we have $N^{1/4}<P\leq \sqrt{N}$ in this case, and since $s\leq N^{1/8}<P^{1/2}$ the condition $|s\alpha - r| \leq 1/\sqrt{N}< 1/P^{3/2}$ is satisfied by definition. We bound $n\leq x^{5/8} \ll N^{5/16}$ trivially, which can be absorbed to the $N^{2/5+\varepsilon}$ term.
\end{proof}
We use this to get information on exponential sums of $r_2(n)$.
\begin{lem}\label{exponential_sum_of_r_2}
Under the assumptions of Lemma \ref{trigonometric_sum_over_primes}, we have for $s\leq N^{1/8}$,
\begin{equation*}
\sum_{n\leq N} r_2(n) e(\alpha n)    \ll_{\varepsilon} \bigg( N^{9/10+\varepsilon} + \frac{\sqrt{N}}{s^{1/2-\varepsilon}|\xi|^{1/2}} \bigg) (\log N)^c  ,\tag{i}
\end{equation*}
for some absolute constant $c>0$, and for $s\gg (\log N)^{3073}$,
\begin{equation*}
\sum_{n\leq N} r_2(n) e(\alpha n) \ll x (\log x)\bigg( \frac{1}{s}+ \frac{1}{N^{1/3}} + \frac{s}{N^2} \bigg)^{\frac{1}{192}} . \tag{ii}
\end{equation*}
\end{lem}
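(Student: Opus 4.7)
The plan is to factor $r_2(n)=\sum_{a^2+b^2=n}\Lambda(a)\Lambda(b)$ and reduce to the single-variable estimate of Lemma \ref{trigonometric_sum_over_primes}. Writing
\[
\sum_{n\leq N} r_2(n)e(\alpha n) \;=\; \sum_{a^2+b^2\leq N}\Lambda(a)\Lambda(b)\,e(\alpha(a^2+b^2)),
\]
we fix $a$ and note that the constraint $a^2+b^2\leq N$ forces $a\leq \sqrt{N}$ and $b\leq \sqrt{N-a^2}$. Hence the sum rewrites as
\[
\sum_{a\leq \sqrt{N}}\Lambda(a)\,e(\alpha a^2)\;S\!\left(\alpha,\sqrt{N-a^2}\right),\qquad S(\alpha,x):=\sum_{n\leq x}\Lambda(n)e(\alpha n^2).
\]
Since $\sqrt{N-a^2}\leq \sqrt{N}$ for every $a$ in the outer range, the hypotheses of Lemma \ref{trigonometric_sum_over_primes} apply uniformly to the inner sum.

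For part (i), the assumption $s\leq N^{1/8}$ allows us to apply Lemma \ref{trigonometric_sum_over_primes}(i), giving
\[
\left|S\!\left(\alpha,\sqrt{N-a^2}\right)\right|\;\ll_{\varepsilon}\;\Bigl(N^{2/5+\varepsilon}+\frac{1}{s^{1/2-\varepsilon}|\xi|^{1/2}}\Bigr)(\log N)^c
\]
uniformly in $a$. Taking absolute values in the outer sum and using Chebyshev's bound $\sum_{a\leq \sqrt{N}}\Lambda(a)\ll \sqrt{N}$ absorbs a factor of $\sqrt{N}$ and yields exactly
\[
\Bigl(N^{9/10+\varepsilon}+\frac{\sqrt{N}}{s^{1/2-\varepsilon}|\xi|^{1/2}}\Bigr)(\log N)^c,
\]
which is the target bound. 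Part (ii) is entirely parallel: apply Lemma \ref{trigonometric_sum_over_primes}(ii) uniformly to $S(\alpha,\sqrt{N-a^2})$ and then invoke the same Chebyshev bound on the outer sum.

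There is essentially no obstacle here; the point of the lemma is purely bookkeeping, packaging Lemma \ref{trigonometric_sum_over_primes} in a form directly useful for the circle method on sums of two prime squares. The only mild subtlety is that the inner sum's length varies with $a$, but Lemma \ref{trigonometric_sum_over_primes} is stated uniformly for $x\leq \sqrt{N}$, so this causes no trouble; the loss from using a trivial bound on the outer $\Lambda$-sum (rather than exploiting its own oscillation) is what converts the $N^{2/5+\varepsilon}$ term into $N^{9/10+\varepsilon}$, and is acceptable for the intended application.
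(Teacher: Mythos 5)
Your argument is exactly the paper's proof: open up $n=a^2+b^2$, apply the triangle inequality, use Lemma~\ref{trigonometric_sum_over_primes} for the inner sum over one prime variable, and bound the outer sum by Chebyshev, $\sum_{a\leq\sqrt{N}}\Lambda(a)\ll\sqrt{N}$, which indeed turns $N^{2/5+\varepsilon}$ into $N^{9/10+\varepsilon}$ in part (i). The same bookkeeping handles part (ii) just as you say (any mismatch there is only the statement's own $x$ versus $N$ normalization), so your proposal is correct and essentially identical to the paper's.
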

\begin{proof}
Open up $n=q_1^2+q_2^2$. Then we use triangle inequality to get
$$ \sum_{n\leq N} r_2(n) e(\alpha n) \leq \sum_{q_1\leq \sqrt{N}} \Lambda(q_1) \bigg| \sum_{q_2\leq  \sqrt{N-q_1^2}} \Lambda(q_2) e(\alpha q_2^2) \bigg|, $$
and use Lemma \ref{trigonometric_sum_over_primes}.
\end{proof}
Next, we have some lemmas from Maynard \cite{MR4452438} on the exponential sum over the set of missing digits $\CA$.
\begin{lem}[Maynard \protect{\cite[Lemma 5.1]{MR4452438}}]\label{L1_bound_for_fourier_transform}
There exists a constant $C_g \in [1/\log g,1+3/\log g]$ such that
$$ \sup_{\theta\in \R} \sum_{0\leq a < X} \bigg|\hat{1}_{\CA}\bigg(\theta + \frac{a}{X}\bigg)\bigg| \ll (C_g g \log g)^k. $$
\end{lem}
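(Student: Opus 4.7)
The plan is to exploit the tensor-product structure of $\hat{1}_{\CA}$ that comes from the independence of base-$g$ digits. Since $X = g^k$, every $n \in [0,X)$ has a unique expansion $n = \sum_{i=0}^{k-1} a_i g^i$ with digits $a_i \in \{0,\ldots, g-1\}$, and $n \in \CA$ if and only if $a_i \neq b$ for every $i$. Setting
\[
F(\beta) := \sum_{\substack{0 \leq a < g \\ a \neq b}} e(a\beta),
\]
one obtains the factorization
\[
\hat{1}_{\CA}(\alpha) = \sum_{n \in \CA(X)} e(n\alpha) = \prod_{i=0}^{k-1} F(g^i \alpha).
\]

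Writing the frequency variable as $\alpha = \theta + a/g^k$ and expanding $a = \sum_{j=0}^{k-1} a_j g^j$ in base $g$, one computes
\[
g^i \alpha \equiv g^i \theta + \sum_{j=0}^{k-i-1} a_j g^{i+j-k} \pmod{1},
\]
so $|F(g^i\alpha)|$ depends only on the digits $a_0,\ldots, a_{k-i-1}$ (and on $\theta$). In particular, for each $s \in \{0,1,\ldots,k-1\}$, the digit $a_{k-1-s}$ enters the $i=s$ factor as the leading term $a_{k-1-s}/g$, and does not enter any factor with $i > s$.

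This structure suggests summing out the digits $a_{k-1}, a_{k-2}, \ldots, a_0$ sequentially. After summing $a_{k-1}$, only the $i=0$ factor is affected, and we obtain a uniform bound $K := \sup_\gamma \sum_{m=0}^{g-1} |F(\gamma + m/g)|$ times the remaining product $\prod_{i \geq 1}|F(g^i\alpha)|$, which no longer depends on $a_{k-1}$. Continuing this peeling and using at each step the analogous uniform bound $K$ for the unique unhandled factor $|F(g^s\alpha)|$ that still depends on $a_{k-1-s}$, we reach
\[
\sum_{a=0}^{g^k-1} \Big|\hat{1}_{\CA}(\theta + a/g^k)\Big| \leq K^k
\]
uniformly in $\theta$.

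The last ingredient is the bound $K \leq C_g g \log g$ with $C_g$ in the claimed range. Writing $F(\beta) = D_g(\beta) - e(b\beta)$ with Dirichlet kernel $D_g(\beta) = \sum_{a=0}^{g-1} e(a\beta)$ and using $|D_g(\gamma+m/g)| = |\sin(\pi g\gamma)|/|\sin(\pi(\gamma + m/g))|$, one has
\[
K \leq g + \sup_{\gamma\in\R} |\sin(\pi g\gamma)| \sum_{m=0}^{g-1} \frac{1}{|\sin(\pi(\gamma + m/g))|},
\]
and the classical Lebesgue-type sum is $(2/\pi) g\log g + O(g)$, yielding $K = O(g\log g)$. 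The main technical obstacle is pinning down the implicit constant sharply enough to produce $C_g \in [1/\log g, 1+3/\log g]$ rather than a soft $O(g\log g)$ bound; this requires careful optimization of the Dirichlet-kernel estimate together with bookkeeping for the contribution of the $-e(b\beta)$ correction.
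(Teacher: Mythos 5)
Your argument is correct in substance and is essentially the proof of Maynard's Lemma 5.1, which the paper simply cites rather than reproves: the digit-wise product formula $\hat{1}_{\CA}(\alpha)=\prod_{i<k}F(g^i\alpha)$, the peeling of one digit per factor against the uniform bound $K=\sup_\gamma\sum_{m<g}|F(\gamma+m/g)|$, and the Dirichlet-kernel estimate $K\ll g\log g$ are exactly the cited route, and since $g$ is taken sufficiently large the soft bound $(2/\pi)g\log g+O(g)$ already lands inside the stated range $C_g\leq 1+3/\log g$, so the constant-chasing you worry about is not a real obstacle here. One small caveat: when $b=0$ the equivalence ``$n\in\CA$ iff all $k$ padded digits differ from $b$'' fails (leading zero digits are allowed for membership in $\CA$), so the single product is not exactly $\hat{1}_{\CA}$; one should instead decompose according to the digit-length $j\leq k$ and apply the same peeling to each product $\prod_{i<j}F(g^i\alpha)$, which only changes the final bound by a convergent geometric factor.
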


\begin{lem}[Maynard \protect{\cite[Lemma 5.3]{MR4452438}}]\label{hybrid_bound_for_fourier_transform}
Let $S,B\gg 1$. Then we have
$$ \sum_{\substack{S\leq s <2S \\ (r,s)=1 }} \sum_{\substack{|\eta|< D \\ Xr/s+\eta \in \Z}} \bigg| \hat{1}_{\CA}\bigg( \frac{r}{s} + \frac{\eta}{X} \bigg)\bigg| \ll \#\CA(X) (S^2D)^{\alpha_g} + S^2D(C_g \log g)^k , $$
where $C_g$ is the constant in Lemma \ref{L1_bound_for_fourier_transform}, and
$$ \alpha_g = \frac{\log\big(C_g\frac{g}{g-1}\log g\big)}{\log g} .$$
\end{lem}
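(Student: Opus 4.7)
The plan is to interpolate between the trivial $L^\infty$ bound $|\hat{1}_\CA(\theta)|\le \#\CA(X)$ and the $L^1$ bound of Lemma \ref{L1_bound_for_fourier_transform}, exploiting the multiplicative structure of the Fourier transform of the missing-digit set. For a parameter $K$ to be chosen and $Y:=g^K$, the digit expansion splits as $n=n_1+Yn_2$ with $n_1$ ranging over the bottom-$K$ missing-digit integers (a set I will call $\CA^{(1)}$) and $n_2$ over the top-$(k-K)$ missing-digit integers ($\CA^{(2)}$). This yields
\[
\hat{1}_\CA(\theta) = \hat{1}_{\CA^{(1)}}(\theta)\cdot \hat{1}_{\CA^{(2)}}(Y\theta),
\]
and I would bound $|\hat{1}_{\CA^{(1)}}(\theta)|\le (g-1)^K$ trivially while reserving Lemma \ref{L1_bound_for_fourier_transform} for the second factor.

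The crucial observation is that the constraint $Xr/s+\eta\in\Z$ forces $\theta = r/s + \eta/X = m/X$ for an integer $m$, so $Y\theta = m/(X/Y)$ is a multiple of $1/(X/Y)$; for fixed $(r,s)$ the admissible $m$ ranges over a window of $2D$ consecutive integers centred at $Xr/s$. Provided $X/Y\gg S^2D$, a Farey-type separation — rationals with denominators in $[S,2S)$ differ by at least $1/(4S^2)$, so after multiplication by $Y$ and after adding shifts of size at most $YD/X$, the values $Yr/s+\eta Y/X$ fall in distinct residues modulo $1/(X/Y)$ — shows that as $(r,s,\eta)$ varies, the points $Y\theta$ form a subset of one complete system of representatives mod $1/(X/Y)$. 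Applying Lemma \ref{L1_bound_for_fourier_transform} to $\CA^{(2)}$ then bounds the full double sum by
\[
(g-1)^K (C_g g\log g)^{k-K} = \#\CA(X)\cdot(X/Y)^{\alpha_g},
\]
and choosing $Y$ so that $X/Y\asymp S^2D$ produces the first claimed term $\#\CA(X)(S^2D)^{\alpha_g}$.

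When $S^2D$ is so large that the optimization above would demand $Y<1$, I would instead take $K=0$ and apply Lemma \ref{L1_bound_for_fourier_transform} directly. In this regime each residue $m\pmod X$ is hit by at most $O(1+S^2D/X)$ of the triples $(r,s,\eta)$, so
\[
\sum_{s,r,\eta}|\hat{1}_\CA(m/X)| \ll (1+S^2D/X)\cdot X(C_g\log g)^k \ll S^2D(C_g\log g)^k,
\]
which is the second claimed term.

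The main obstacle is the bookkeeping in the subcritical case when $(s,Y)>1$: the denominator of $Yr/s\pmod 1$ collapses to $s/(s,Y)$, and different $(r,s)$ pairs can then share the same scaled rational. One must verify that even after this collapse, the scaled shifts still populate distinct residues modulo $1/(X/Y)$ with controlled multiplicity. I expect this to follow from applying the Farey separation digit-by-digit along the factorization $Y=g^K$, using that the primes dividing $g$ contribute only a bounded (in $g$) penalty, so that the implied constant remains independent of $k$. Everything else is clean optimization in $K$.
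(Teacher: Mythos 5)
This lemma is not proved in the paper at all --- it is quoted verbatim from Maynard --- so your attempt can only be measured against Maynard's argument, with which it shares the digit factorization $\hat{1}_{\CA}(\theta)=\hat{1}_{\CA^{(1)}}(\theta)\,\hat{1}_{\CA^{(2)}}(g^K\theta)$ but not the decisive step. The gap is your claim that, when $X/Y\gg S^2D$, the scaled points $Y\theta$ occupy distinct residues modulo $1/(X/Y)$ (or do so with bounded multiplicity). That is true when the denominators are coprime to $g$ (and even there the correct justification is the divisibility argument: a nonzero difference $r_1/s_1-r_2/s_2-j/Y$ is a rational with denominator at most $4S^2Y$, hence at least $1/(4S^2Y)>2D/X$; mere $1/(4S^2)$-spacing does not rule out wrap-around by multiples of $1/Y$). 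But it fails badly for denominators sharing factors with $g$, which the lemma does not exclude and which matter most elsewhere in the paper. Take $s=g^a\in[S,2S)$ and a range where your optimization forces $Y=g^K\geq s$ (e.g.\ $D\asymp 1$, $S\asymp X^{1/4}$, so $Y\asymp X/(S^2D)\asymp X^{1/2}$): then $Yr/s\in\Z$ for every $r$, so for each fixed $\eta$ all $\varphi(s)\asymp S$ numerators collapse to the single residue $Y\eta/X$. Your estimate for this one modulus becomes $(g-1)^K\varphi(s)(C_g g\log g)^{k-K}\asymp S\cdot\#\CA(X)(S^2D)^{\alpha_g}$, a factor $\asymp S$ larger than the first claimed term, and the second term $S^2D(C_g\log g)^k=S^2D\,\#\CA(X)X^{\alpha_g-1}$ is far too small to absorb it. So the loss is not a "bounded in $g$" penalty; the multiplicity genuinely grows like $S$, and for such $s$ your bound is essentially the trivial one.

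The failure is structural, not bookkeeping: near rationals whose denominators are built from primes of $g$, the bottom factor $\hat{1}_{\CA^{(1)}}$ is typically far below its trivial maximum, and it is precisely the averaging of this factor over the numerators $r$ that must produce the saving $S^{2\alpha_g}$; once you replace it by $(g-1)^K$, no analysis of how the scaled points distribute can recover the stated bound. Maynard's proof uses the $L^1$-type input on both ends of the digit expansion: roughly, the bottom $\approx\log_g(4S^2)$ digits are paired (via a sup-over-short-interval version of Lemma \ref{L1_bound_for_fourier_transform}) with the $1/(4S^2)$-separated Farey fractions, the top $\approx\log_g(2D)$ digits with the $2D$ consecutive shifts $\eta$, and only the middle digits are bounded trivially; multiplying the three contributions is what yields $\#\CA(X)(S^2D)^{\alpha_g}$ rather than $\#\CA(X)X^{\alpha_g}$ or your $S\cdot\#\CA(X)(S^2D)^{\alpha_g}$. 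Your supercritical case ($S^2D\gtrsim X$, $K=0$, multiplicity $O(1+S^2D/X)$ per integer $m$) is fine, but the main subcritical case needs the two-sided use of the $L^1$ bound, not the trivial bound on $\hat{1}_{\CA^{(1)}}$.
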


\begin{lem}\label{Linfty_bound_for_fourier_transform}
Let $s<X^{1/3}$ be of the form $s=s_1s_2$ with $(s_1,g)=1$ and $s_1\not=1$, and let $|\epsilon|<1/2X^{2/3}$. Then for any $(r,s)=1$, we have
$$ \bigg|\hat{1}_{\CA} \bigg( \frac{r}{s} +\epsilon \bigg)\bigg| \ll \#\CA(X) X^{- c_g /\log s} , $$
for some constant $c_g>0$ depending only on $g$.
\end{lem}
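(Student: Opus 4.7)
The plan is to exploit the product structure of $\hat{1}_\CA$ inherited from the base-$g$ digit representation. Writing each $n\in\CA(X)$ as $n = \sum_{i=0}^{k-1} a_i g^i$ with $a_i \in \{0,\dots,g-1\}\setminus\{b\}$, one has
\[
\hat{1}_\CA(\theta) = \prod_{i=0}^{k-1} F(g^i\theta),\qquad F(\theta) := \sum_{\substack{0\leq a<g\\ a\neq b}}e(a\theta),
\]
and $\#\CA(X) = (g-1)^k$. The elementary identity $(g-1)^2 - |F(\theta)|^2 = 4\sum_{d=1}^{g-1}N_d\sin^2(\pi d\theta)$, with $N_d := \#\{(a,a') : a,a'\neq b,\ a-a' = d\} \geq g-d-2$, applied at $d=1$ gives $|F(\theta)|/(g-1)\leq 1 - c\sin^2(\pi\theta)/g$ for an absolute $c>0$. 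Taking logarithms,
\[
\log\frac{|\hat{1}_\CA(\theta)|}{\#\CA(X)} \leq -\frac{c}{g}\sum_{i=0}^{k-1}\sin^2(\pi g^i\theta),
\]
so the task is to lower bound this orbit sum.

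For $\theta = r/s + \epsilon$, factor $s = g^a s'$ with $(s',g)=1$; the hypotheses $s_1\mid s$, $(s_1,g)=1$, $s_1>1$ force $s_1 \mid s'$, $s'\geq 2$ and $(r,s')=1$. Setting $m_j := g^jr \bmod s' \in (\Z/s')^{\times}$, for $i\geq a$ one has $g^i\theta \equiv m_{i-a}/s' + g^i\epsilon \pmod 1$. Choosing $I := \lfloor 2k/3\rfloor - \lfloor\log_g s\rfloor$, the assumptions $|\epsilon|\leq 1/(2X^{2/3})$ and $s\leq X^{1/3}$ yield $|g^i\epsilon|\leq 1/(2s)\leq \|m_{i-a}/s'\|/2$ on $a\leq i\leq I$; since $I - a\gg k$, it suffices to lower bound $\sum_{j=0}^{I-a}\|m_j/s'\|^2$.

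The crux is the per-period estimate $\sum_{j=0}^{\tau-1}\|m_j/s'\|^2 \gg_g \tau/\log s'$ with $\tau = \ord_{s'}(g)$, proved by a dichotomy. For large $\tau$ ($\tau\gtrsim s'/g$), a pigeonhole argument on the multiset $\{m_j\wedge(s'-m_j)\}_{j<\tau}\subset\{1,\dots,\lfloor s'/2\rfloor\}$---each value with multiplicity at most $2$---yields $\sum(m_j\wedge(s'-m_j))^2\gg \tau^3$, and hence $\sum\|m_j/s'\|^2\gg \tau/g^2$. For small $\tau$, the periodic base-$g$ expansion of $r/s'$ admits no consecutive run of $0$ or $(g-1)$ digits exceeding length $\log_g s'$, since such a run would force some $m_j<s'/g^{\log_g s'}=1$; in every digit window of length $\sim\log_g s'$ there is then either a digit in $\{1,\dots,g-2\}$ contributing $\|m_j/s'\|\geq 1/g$, or the digits come from $\{0,g-1\}$ with the algebraic constraint $r/s'=(g-1)B/(g^\tau-1)$ (with $B$ having binary base-$g$ digits) that forces $s'\leq (g^\tau-1)/(g-1)$ and still yields $\|m_j/s'\|\gg_g 1$ by direct inspection. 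Combining the two cases and summing over periods gives $\sum_{i=a}^I\sin^2(\pi g^i\theta)\gg_g k/\log s$, whence $|\hat{1}_\CA(r/s+\epsilon)|/\#\CA(X) \leq X^{-c_g/\log s}$. The main obstacle is calibrating constants across the dichotomy, particularly in the intermediate regime $\log_g s'\ll \tau\ll s'/g$, where a refined digit-expansion analysis is needed.
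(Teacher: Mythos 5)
Your overall strategy -- the product formula $\hat{1}_{\CA}(\theta)=\prod_{i=0}^{k-1}F(g^i\theta)$, the bound $|F(\theta)|/(g-1)\leq 1-c\sin^2(\pi\theta)/g$, the truncation $i\leq 2k/3-\log_g s$ so that $g^i|\epsilon|\leq 1/(2s)$ is dominated by $\|m_j/s'\|\geq 1/s'$, and the reduction to a lower bound for $\sum_j\|m_j/s'\|^2$ along the orbit of multiplication by $g$ modulo the part $s'$ of $s$ coprime to $g$ -- is exactly the standard route; the paper itself does not reprove this but quotes Maynard's Lemma 5.4 (whose proof runs along these lines) with the constant sharpened to $c_g/\log s$. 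The genuine gap is in your crux estimate. You reduce to a \emph{per-period} bound with $\tau=\ord_{s'}(g)$ and then "sum over periods", but the available range of indices has length only $\asymp k\asymp\log_g X$, while $\tau$ can be as large as $s'\asymp X^{1/3}$ (e.g.\ $s'$ a prime near $X^{1/3}$ with $g$ a primitive root); then not even one full period is visible, and your large-$\tau$ pigeonhole, applied to the partial segment of $J\asymp k$ distinct residues actually available, only yields $\sum_j\|m_j/s'\|^2\gg J^3/s'^2$, which is exponentially small and useless. In addition, in the small-$\tau$ branch the case of a window whose digits all lie in $\{0,g-1\}$ is disposed of via the identity $r/s'=(g-1)B/(g^\tau-1)$ "by direct inspection", which is not an argument (a window of alternating $0$'s and $(g-1)$'s has no long run and no digit in $\{1,\dots,g-2\}$, so it is not covered by either of your stated alternatives), and you yourself concede that the regime $\log_g s'\ll\tau\ll s'/g$ is open. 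As written, the key inequality $\sum_i\sin^2(\pi g^i\theta)\gg_g k/\log s$ is therefore not established.

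The missing point is that no dichotomy on $\tau$ (indeed no periodicity at all) is needed: if $\|g^j r/s'\|<1/(2g)$ then $\|g^{j+1}r/s'\|=g\,\|g^j r/s'\|$ exactly, and since $\|g^j r/s'\|\geq 1/s'$, any block of consecutive indices on which all norms stay below $1/(2g)$ has length at most $\log_g s'+O(1)$. Equivalently, in digit language this is precisely your run observation pushed to completion: norms below $1/(2g)$ force the corresponding digit to be $0$ or $g-1$, and an adjacent pair $0,(g-1)$ or $(g-1),0$ already gives $\|m_j/s'\|\geq 1/g-1/g^2$, so such a block must be a constant run of $0$'s or of $(g-1)$'s, hence of length $\leq\log_g s'$. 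Thus every window of $\asymp\log_g s$ consecutive indices in $[a,I]$ contains a $j$ with $\sin^2(\pi g^j\theta)\gg 1/g^2$ (after the $\epsilon$-perturbation, which you have already controlled), giving $\sum_i\sin^2(\pi g^i\theta)\gg_g k/\log s$ uniformly in $\tau$ and for partial orbit segments, which is what the lemma needs. Two further small repairs: $s$ cannot in general be written as $g^a s'$ with $(s',g)=1$ (take $g=10$, $s=4$); write instead $s=ts'$ with every prime of $t$ dividing $g$, so $t\mid g^{O(\log s)}$ and the orbit statement holds for $i$ beyond that shift, which changes nothing else; and the hypotheses give $s_1\mid s'$, so $s'>1$, which is what makes $m_j\neq 0$ and the whole argument nontrivial.
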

\begin{proof}
This can be obtained from \cite[Lemma 5.4]{MR4452438} by changing $c_q$ there to $c_g /\log g$.
\end{proof}

\begin{remark}\label{rmk_on_L_infty_bound}
Note that if we had major arcs of the form
$$ \bigcup_{\substack{s\leq S \\ (r,s)=1}} \bigg[ \frac{r}{s} - \frac{S}{X} , \frac{r}{s} + \frac{S}{X}  \bigg], $$
then we would be summing the above quantity over $s\leq S$, which gives
$$ \sum_{\substack{s\leq S  \\ (r,s)=1}} \sum_{\substack{|\eta|\leq S \\ Xr/s + \eta \in \Z}} \bigg|\hat{1}_{\CA} \bigg( \frac{r}{s} +\varepsilon \bigg)\bigg| \ll \#\CA(X) X^{- c_g /\log S} S^3. $$
From this, one can see that the best $S$ you can hope, to get savings for this sum, is $\exp(c_g^{1/2}/3 (\log X)^{1/2})$. This in turn means our error term can only have a saving of a power of $\exp(O((\log X)^{1/2}))$. However, this means we can't take our sieving variable to be beyond this, and we need $S=\exp((\log X)^{3/4+\varepsilon})$ to get an asymptotic in Theorem \ref{anatomy_result}.
\end{remark}
Finally, we have the following exponential sum to study minor arcs coming from the study of Theorem \ref{off-diagonal_result}.
\begin{lem}\label{quadratic_exponential}
Let $\alpha = r/s + \xi$ with $(r,s)=1$ and $|\xi|< 1/s^2$. Also let $M_1,M_2,N$ be positive integers with $(M_1^2+M_2^2)N\ll x$ and $\max\{ M_1,M_2 \} \gg x^{\delta}$ for some $\delta>0$. Then, for any sequence $(a_n)_{n\in \N}$ with $|a_n|\leq \tau(n)$, we have
$$\sum_{n\leq N} a_n \sum_{\substack{m_1\leq M_1 \\ m_2\leq M_2 \\ (m_1,m_2)\equiv (u,v)\mod q}} e(\alpha n(m_1^2+m_2^2)) \ll q^{1/2} \bigg( x^{1-\delta/4} + x (s|\xi|)^{1/4} + \frac{x^{3/4}}{(s|\xi|)^{1/4}} \bigg) (\log x)^3 ,$$
and also for $s\gg 1$ sufficiently large
$$\sum_{n\leq N} a_n \sum_{\substack{m_1\leq M_1 \\ m_2\leq M_2 \\ (m_1,m_2)\equiv (u,v)\mod q}} e(\alpha n(m_1^2+m_2^2)) \ll q^{1/2} \bigg( x^{1-\delta/4} +s^{1/4}x^{3/4}+ \frac{x}{s^{1/4}} \bigg) (\log x)^3 . $$
\end{lem}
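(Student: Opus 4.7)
By symmetry we may assume $M_1\geq M_2$, so $M_1\gg x^{\delta}$. The inner double sum factors as $T_1(\alpha n)T_2(\alpha n)$, where
$$T_i(\beta):=\sum_{\substack{m_i\leq M_i\\ m_i\equiv u_i\mod q}}e(\beta m_i^2),$$
with $u_1=u$, $u_2=v$. Applying Cauchy--Schwarz in $n$ and using $|a_n|\leq\tau(n)$ together with the standard bound $\sum_{n\leq N}\tau(n)^2\ll N(\log x)^3$, we obtain
$$\bigg|\sum_n a_n T_1(\alpha n)T_2(\alpha n)\bigg|^2 \ll N(\log x)^3 \sum_{n\leq N}|T_1(\alpha n)|^2|T_2(\alpha n)|^2.$$
The trivial bound $|T_2(\alpha n)|\ll M_2/q+1$ then reduces matters to estimating $\sum_{n\leq N}|T_1(\alpha n)|^2$, which is the real content of the lemma.

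For this, we apply Weyl differencing to $|T_1(\alpha n)|^2$: substituting $m_1=m_1'+qh$ so that $|h|\leq M_1/q$, and summing the resulting geometric progression in $m_1'$, gives
$$|T_1(\alpha n)|^2 \ll \frac{M_1}{q}+\sum_{0<|h|\leq M_1/q}\min\!\bigg(\frac{M_1}{q},\,\frac{1}{\|2q^2\alpha n h\|}\bigg).$$
Summing over $n\leq N$ and changing variables to $\ell=nh$, with each $\ell$ having at most $\tau(|\ell|)$ preimages, we arrive at
$$\sum_{n\leq N}|T_1(\alpha n)|^2 \ll \frac{NM_1}{q}+\sum_{0<|\ell|\leq NM_1/q}\tau(|\ell|)\min\!\bigg(\frac{M_1}{q},\,\frac{1}{\|2q^2\alpha\ell\|}\bigg).$$

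The remaining minimum-sum is handled by the classical Farey-arc estimate
$$\sum_{|\ell|\leq L}\min(M,\,\|\gamma\ell\|^{-1})\ll(L/s^*+1)(M+s^*\log s^*),$$
applied with $\gamma=2q^2\alpha=2q^2 r/s+2q^2\xi$, whose reduced denominator is $s^*\asymp s/(s,2q^2)$; the divisor weights $\tau(|\ell|)$ contribute at most an extra $(\log x)^c$ by Shiu-type bounds. Plugging this back, balancing the resulting expression in the three natural regimes governed by $s|\xi|$, taking a square root, and using $(M_1^2+M_2^2)N\ll x$ together with the $|T_2|\ll M_2/q$ saving, produces the three terms $x^{1-\delta/4}$, $x(s|\xi|)^{1/4}$, and $x^{3/4}(s|\xi|)^{-1/4}$ with the prefactor $q^{1/2}$. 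The second inequality then follows by inserting the worst-case $|\xi|=1/s^2$ allowed by the hypothesis.

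The main obstacle will be the final balancing step: the factor $2q^2$ inside the norm $\|\cdot\|$ must be tracked carefully through the reduced denominator $s^*$, since $(s,2q^2)$ can be large and would then erode the Farey-arc reduction. One has to verify that in this regime the trivial/diagonal $NM_1/q$ contribution still combines favourably to yield the $x^{1-\delta/4}$ term, and that the divisor weights, the logarithms from Cauchy--Schwarz, and those from the Farey-arc lemma all compress into the stated $(\log x)^3$.
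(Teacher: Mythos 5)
Your overall skeleton (Cauchy--Schwarz in $n$, Weyl differencing on the inner quadratic sum, gluing $n$ and the differencing variable into a single variable $\ell$ with divisor-bounded multiplicity) is the same as the paper's, but two of the steps you leave open are exactly where the content of the lemma lies, and as proposed they do not close. First, the estimate you invoke for the minimum-sum, $\sum_{|\ell|\leq L}\min(M,\|\gamma\ell\|^{-1})\ll (L/s^*+1)(M+s^*\log s^*)$, has no dependence on $\xi$ at all, so no amount of ``balancing in the regimes governed by $s|\xi|$'' can produce the terms $x(s|\xi|)^{1/4}$ and $x^{3/4}(s|\xi|)^{-1/4}$ of the first inequality; the paper obtains these from the $\xi$-sensitive bound of Maynard (Lemma 4.1 of \cite{MR4452438}), which yields $\sum_n|T_1|^2\ll NM_1+qNM_1^2\big(\tfrac{1}{M_1}+s|\xi|+\tfrac{1}{NM_1^2s|\xi|}\big)^{1/2}(\log x)^3$, and uses the classical estimate only to prove the second inequality. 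Relatedly, your final claim that the second inequality ``follows by inserting the worst-case $|\xi|=1/s^2$'' is backwards: the third term $x^{3/4}(s|\xi|)^{-1/4}$ is decreasing in $|\xi|$, so the dangerous case is $\xi$ very small and no substitution into the first bound recovers $s^{1/4}x^{3/4}+x s^{-1/4}$; the two inequalities are complementary and require the two different minimum-sum estimates.

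Second, the issue you flag as ``the main obstacle'' --- the reduced denominator of $2q^2\alpha$ being $s/(s,2q^2)$ --- is not a bookkeeping nuisance but a genuine failure mode of your set-up (if $s\mid 2q^2$ the rational part contributes no cancellation at all), and your proposal offers no way around it. The paper avoids it structurally: it keeps only one factor of $q$ inside the phase (the geometric-series ratio is $e(2\alpha nhq)$ with $h$ a multiple of $q$), drops the divisibility condition $q\mid h$ (an upper bound, since the summands are nonnegative), and then glues $\ell=2n|h|q\leq 2qNM_1$, so that the phase is $\alpha\ell$ with denominator exactly $s$; the price is the lengthened $\ell$-range and the factor $q$ that becomes the final $q^{1/2}$. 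Without this (or an equivalent) device the Farey/Maynard step does not apply uniformly in $q$. A further, more minor, gap: the divisor weights cannot be discharged by asserting ``Shiu-type bounds give an extra $(\log x)^c$'' --- the $\ell$ with $\|\alpha\ell\|$ small lie in structured sets and a pointwise bound times a log is not a proof; the paper splits at a threshold $B$, using the minimum-sum estimate on $\tau(\ell)\leq B$ and Rankin's trick on $\tau(\ell)>B$, with $B$ chosen to equalize the two contributions. Your asymmetric treatment of $T_2$ (trivial bound $M_2/q+1$ after one Cauchy--Schwarz, instead of the paper's second Cauchy--Schwarz) is harmless and would give a bound of the stated shape once the two issues above are repaired.
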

\begin{proof}
Without loss of generality, we can assume $M_2\leq M_1$, and thus $NM_1^2 \ll x$. We start by applying Cauchy-Schwarz twice to our equation to get
\begin{align*}
\sum_{n\leq N} a_n \sum_{\substack{m_1\leq M_1 \\ m_2\leq M_2 \\ (m_1,m_2)\equiv (u,v)\mod q}}& e(\alpha n(m_1^2+m_2^2)) \leq \bigg( M_1M_2 \sum_{n\leq N} |a_n|^2 \bigg)^{1/2} \\
&\hspace{1cm}  \times\bigg( \sum_{n\leq N} \bigg|  \sum_{\substack{m_2\leq M_2 \\ m_2\equiv v\mod q}} e(\alpha nm_2^2) \bigg| \cdot \bigg|  \sum_{\substack{m_1\leq M_1 \\ m_1\equiv u\mod q}} e(\alpha nm_1^2) \bigg| \bigg)^{1/2} \\
&\leq \bigg( M_1M_2 \sum_{n\leq N} |a_n|^2 \bigg)^{1/2} \\
&\hspace{1cm}\times\bigg( \sum_{n\leq N} \bigg|  \sum_{\substack{m_2\leq M_2 \\ m_2\equiv v\mod q}} e(\alpha nm_2^2) \bigg|^2 \bigg)^{1/4} \bigg( \sum_{n\leq N} \bigg|  \sum_{\substack{m_1\leq M_1 \\ m_1\equiv u\mod q}} e(\alpha nm_1^2) \bigg|^2 \bigg)^{1/4} .
\end{align*}
We note that the sum $\sum_{n\leq N} |a_n|^2 \ll N (\log x)^3$ since $|a_n|\leq \tau(n)$. By symmetry, we just focus on the $m_1$ sum. We have, by the Weyl differencing,
$$ \bigg|\sum_{\substack{m_1\leq M_1 \\ m_1\equiv u\mod q}} e(\alpha nm_1^2) \bigg|^2 \leq \sum_{\substack{|h|\leq M_1 \\ q|h}} \bigg| \sum_{\substack{m\leq M_1 \\ m\equiv u \mod q}} e(2\alpha n h m) \bigg| \ll M_1 +  \sum_{0<|h|\leq M_1} \min\{ M_1 , \| 2\alpha q n h \|^{-1} \} . $$
We also have the sum over $n\leq N$, hence, we can glue together $\ell:=2n|h|q$ to get
$$ \sum_{n\leq N}\bigg|\sum_{\substack{m_1\leq M_1 \\ m_1\equiv u\mod q}} e(\alpha nm_1^2) \bigg|^2  \ll NM_1 + \sum_{\ell \leq 2qNM_1} \tau(\ell) \min\{ M_1 , \|\alpha \ell \|^{-1} \} . $$
We split the sum over $\ell$ into $\tau(\ell)\leq B$ and $\tau(\ell) > B$ with
$$ B:= \bigg( \frac{1}{M_1} + s|\xi| + \frac{1}{qNM_1^2 s|\xi|}\bigg)^{-1/2}. $$
We first look at the $\tau(\ell)\leq B$ sum by using the trivial bound. For the remaining sum without the divisor function, we invoke \cite[Lemma 4.1]{MR4452438}; note that in our case we have $d|\xi|<1/d$, so we don't have the final $d$ term appearing. Finally, for the $\tau(\ell)>B$ sum, we apply Rankin's trick to get
$$ \sum_{n\leq N} \bigg|\sum_{\substack{m_1\leq M_1 \\ m_1\equiv u\mod q}} e(\alpha nm_1^2) \bigg|^2  \ll NM_1 + qNM_1^2 \bigg( \frac{1}{M_1} + s|\xi| + \frac{1}{NM_1^2 s|\xi|}\bigg)^{1/2} (\log x)^3. $$
Note that the $NM_1$ term is smaller than the first term on the right. We get a similar result for the $m_2$ sum, and since $M_2\leq M_1$, we can assume that we get the same bound. Hence, we get
\begin{align*}
\sum_{n\leq N} a_n \sum_{\substack{m_1\leq M_1 \\ m_2\leq M_2 \\ (m_1,m_2)\equiv (u,v)\mod q}} e(\alpha n(m_1^2+m_2^2)) &\ll q^{1/2} ( N M_1^2)^{1/2} \bigg( N^2M_1^3 + N^2M_1^4 s|\xi| + \frac{NM_1^2}{s|\xi|}  \bigg)^{1/4} (\log x)^3 \\
&\ll q^{1/2} \bigg( N M_1^{7/4} + N M_1^2 (s|\xi|)^{1/4} + \frac{(NM_1^2)^{3/4}}{(s|\xi|)^{1/4}} \bigg) (\log x)^3.
\end{align*}
Lastly, since $NM_1^2 \ll x$ and $M_1\gg x^{\delta}$, we get the first result. For the second one, we use the classical estimate
$$ \sum_{n\leq N} \min\{M , \|\alpha n \|^{-1}\} \ll \bigg(M+N+\frac{MN}{s} + s \bigg) \log x ,$$
instead.
\end{proof}
In our application, we won't have the second term in the first part of Lemma \ref{quadratic_exponential} since we will have $s|\xi|\leq 1/\sqrt{x}$ and we will choose $\delta=1/8$.

\subsection{Sieve Result}
Lastly, we have the following beta sieve result, see \cite[Fundamental Lemma 6.3]{MR2061214}.
\begin{lem}\label{sieve_lemma}
Let $z\geq 2$, $\kappa>0$, and $D=z^s$ with $s\geq 9\kappa + 2$, and $\mathbb{P}$ be a set of primes. There exists coefficients $\lambda_d$ depending only on $\kappa$ and $D$ such that the followings hold

(i) We have that $|\lambda_d|\leq 1$ and is supported in $\{d\leq D : d|P(z)\}$, where $P(z):=\prod_{\substack{p<z \\ p\in \mathbb{P}}} p$.

(ii) We have 
$$1_{(n,P(z))=1} \leq \sum_{d|n}\lambda_d.$$

(iii) If $g:\N\to [0,1)$ is a multiplicative function such that $g(p)=0$ if $p\not\in \mathbb{P}$, and
$$ \prod_{w_1 < p \leq z_1} (1 - g(p))^{-1} \ll_{\kappa} \bigg(\frac{\log z_1}{\log w_1}\bigg)^{\kappa}, $$
for any $2\leq w_1 < z_1$, then
$$ \sum_{d|P(z)} \lambda_d \cdot g(d) \ll_{\kappa} \prod_{p\leq z} (1 - g(p) ). $$
\end{lem}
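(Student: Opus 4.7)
The plan is to construct the weights $\lambda_d$ explicitly via the Rosser--Iwaniec beta sieve, which is the standard route to the Fundamental Lemma. Concretely, I would pick a truncation parameter $\beta = \beta(\kappa) > 1$ (the fundamental lemma regime is generous enough that a rough choice like $\beta = 2$ together with $s \geq 9\kappa+2$ will work), and define $\lambda_d = 0$ unless $d \mid P(z)$ and $d \leq D$, in which case $d$ factors as $d = p_1 p_2 \cdots p_r$ with $p_1 > p_2 > \cdots > p_r$ and we set $\lambda_d = \mu(d)$ provided the Rosser--Iwaniec truncation condition $p_1 p_2 \cdots p_{l-1} p_l^{\beta} \leq D$ holds for every odd $l \leq r$, and $\lambda_d = 0$ otherwise. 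Part (i) is then immediate, since the weights take values in $\{-1,0,1\}$ and the support is by construction contained in $\{d\leq D : d \mid P(z)\}$.

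For part (ii), the inequality $1_{(n,P(z))=1} \leq \sum_{d \mid n}\lambda_d$ is a combinatorial identity that I would prove by induction on the largest prime factor of $(n,P(z))$, repeatedly applying Buchstab's identity
\[
1_{(n,P(z))=1} = \sum_{d \mid (n,P(z))} \mu(d) = 1 - \sum_{\substack{p \mid n, p<z \\ p \in \mathbb{P}}} 1_{(n/p,P(p))=1}.
\]
The truncation rule is designed so that terms ``surviving'' the truncation contribute only in the upper-bound direction: at each step one discards a tail whose remaining Möbius sum corresponds to an upper-bound Rosser--Iwaniec sieve of smaller depth, which is $\geq 1_{(\cdot,P(\cdot))=1}$ by induction.

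Part (iii) is the analytic heart of the argument. Writing $V(z) := \prod_{p \leq z, p \in \mathbb{P}} (1-g(p))$ and $V(d,z) := \prod_{p < d, p \in \mathbb{P}}(1 - g(p))$, I would expand
\[
\sum_{d \mid P(z)} \lambda_d\, g(d) = V(z) + \sum_{d} (\text{truncation defect})_d,
\]
where the defects are indexed by the squarefree $d = p_1\cdots p_r \mid P(z)$ at which the truncation rule first fails. Using the dimension hypothesis together with Mertens-type products in the form $\prod_{w_1 < p \leq z_1}(1-g(p))^{-1} \ll_\kappa (\log z_1/\log w_1)^\kappa$, each defect is bounded by $V(z)$ times a factor of the shape $(\log p_l^\beta / \log p_l)^{-s/\beta}$ with $s = \log D/\log z$; summing over $r$ and the $p_l$'s gives a geometric-type series in $s$ that is $O_\kappa(1)$ once $s \geq 9\kappa + 2$.

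The main obstacle is bookkeeping in the third step: tracking the Buchstab iteration through the truncation defects and showing the defects telescope to a bounded multiple of $V(z)$ under the dimension hypothesis. This is precisely the content of Friedlander--Iwaniec's treatment in \cite{MR2061214}, and since we only require an order-of-magnitude upper bound (no optimal constants are needed in the applications of this paper), the slack built into the requirement $s \geq 9\kappa + 2$ makes the combinatorial estimation routine rather than delicate; one could just as well cite the Fundamental Lemma directly, which is what we shall do.
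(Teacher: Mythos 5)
Your proposal is correct and matches the paper's treatment: the paper gives no proof of this lemma, simply citing it as Fundamental Lemma 6.3 of Friedlander--Iwaniec, which is exactly the route you end up taking (your sketch of the Rosser--Iwaniec construction, Buchstab iteration, and defect estimates is the standard proof behind that citation, up to minor conventions in the truncation exponent). No further comparison is needed.
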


\section{Average of the Representation Function}\label{average_section}
We work on \eqref{average_sum}, \textit{i.e.}
$$ \sum_{n\in \CA(X)} r_2(n) = \sum_{n\leq X} r_2(n) 1_{\CA}(n) = \frac{1}{X} \sum_{a\leq X} \hat{1}_{\CA}\bigg(\frac{a}{X}\bigg) S\bigg(\frac{-a}{X}\bigg), $$
where
$$ \hat{1}_{\CA}(\theta):=\sum_{n\in \CA(X)} e(\theta n) $$
is the Fourier transform of the indicator, and
$$ S(\theta):=\sum_{n\leq X} r_2(n) e(\theta n), $$
is the Fourier transform of $r_2(n)$.
Now, we split into major and minor arcs. Firstly, note that by the Dirichlet approximation theorem, we can write $\frac{a}{X} = \frac{r}{s} + \xi$ for $s\leq X^{3/4}$ with $s|\xi| \leq 1/X^{3/4}$. So, if we fix $B>0$ large enough, then we can define
$$ \M_{r,s} := \bigg\{ \alpha \in [0,1] : \bigg|\alpha - \frac{r}{s} \bigg| \leq \frac{(\log X)^B}{X} \bigg\} . $$
Then, major arcs are defined as
$$ \M := \bigcup_{\substack{s\leq (\log X)^B \\ (r,s)=1}} \M_{r,s}, $$
and minor arcs are
$$ \m := [0,1]\setminus \M. $$
Then we have
\begin{align*}
 \frac{1}{X} \sum_{a\leq X} &\hat{1}_{\CA}\bigg(\frac{a}{X}\bigg) S\bigg(\frac{-a}{X}\bigg) = \frac{1}{X} \sum_{a/X\in \M} \hat{1}_{\CA}\bigg(\frac{a}{X}\bigg) S\bigg(\frac{-a}{X}\bigg)  +  \frac{1}{X} \sum_{a/X\in \m} \hat{1}_{\CA}\bigg(\frac{a}{X}\bigg) S\bigg(\frac{-a}{X}\bigg) \\
&= \frac{1}{X} \sum_{\substack{s\leq (\log X)^B \\ (r,s)=1}} \sum_{\substack{|\eta|\leq (\log X)^B \\ Xr/s + \eta\in \Z}}\hat{1}_{\CA}\bigg(\frac{r}{s}+ \frac{\eta}{X}\bigg) S\bigg(\frac{-r}{s} + \frac{-\eta}{X}\bigg) + \frac{1}{X} \sum_{a/X\in \m} \hat{1}_{\CA}\bigg(\frac{a}{X}\bigg) S\bigg(\frac{-a}{X}\bigg)  ,
\end{align*}
since we can write $\xi=\frac{\eta}{X}$.

\subsection{Major Arcs}
For the major arcs, we have $\frac{a}{X} = \frac{r}{s} + \frac{\eta}{X}$ with $|\eta| \leq (\log X)^B$. We split into 3 cases:

\noindent \textit{Case 1. $\exists p|s$ but $p\nmid g$:} 
In this case by Lemma \ref{Linfty_bound_for_fourier_transform}, we get 
$$  \frac{1}{X} \sum_{\substack{s\leq (\log X)^B \\ \exists p|s, p\nmid g \\ (r,s)=1}} \sum_{\substack{|\eta|\leq (\log X)^B \\ Xr/s + \eta\in \Z}}\hat{1}_{\CA}\bigg(\frac{r}{s}+ \frac{\eta}{X}\bigg) S\bigg(\frac{-r}{s} + \frac{-\eta}{X}\bigg) \ll _{\varepsilon}\#\CA(X)\cdot \exp( - c_g (\log X)^{1-\varepsilon}) ,$$
by trivially bounding $S(-r/s - \eta/X) \ll X$, and using the bound $\log X/(B\log\log X) \gg_{\varepsilon} (\log X)^{1-\varepsilon}$ for $X\to\infty$.

\noindent \textit{Case 2. $\eta\not=0$:}
In this case, we have
\begin{align*}
\sum_{n\leq X} r_2(n) e\bigg(\bigg(\frac{-r }{s} + \frac{-\eta }{X} \bigg) n \bigg) &= \sum_{\nu\mod s} e\bigg( \frac{-r\nu}{s}\bigg) \sum_{\substack{n\leq X \\ n\equiv \nu \mod s}} r_2(n) e\bigg(\frac{-\eta n}{X}\bigg)\\
&= \frac{1}{\varphi^2(s)}\sum_{\nu\mod s} e\bigg( \frac{-r\nu}{s}\bigg)  \rho(\nu;s) \sum_{n\leq X} r_2(n) e\bigg(\frac{-\eta n}{X}\bigg)+ E(x;s),
\end{align*}
where we recall $\rho(\nu;s)$ is the multiplicative function
$$ \rho(\nu;s) = \#\{ (a,b)\in (\Z/s\Z)^2 : (ab,s)=1,  a^2+b^2\equiv \nu \mod s \}, $$
and
$$E(X;s) \leq s \cdot \max_{\nu\mod s}\bigg| \sum_{n\leq X} r_2(n) e\bigg( \frac{-\eta n}{X} \bigg) \bigg(1_{n\equiv \nu \mod s} - \frac{\rho(\nu;s)}{\varphi^2(s)}\bigg) \bigg| . $$
By the Siegel-Walfisz theorem, we know 
$$E(t) :=\max_{\nu\mod s}\bigg|\sum_{n\leq t}  r_2(n) e\bigg( \frac{-\eta n}{X} \bigg) \bigg(1_{n\equiv \nu \mod s} - \frac{\rho(\nu;s)}{\varphi^2(s)}\bigg) \bigg| \ll_A \frac{t}{(\log t)^A},$$
so, using partial summation we get that
\begin{align*}
E(X;s) &= s\cdot \bigg( e(-\eta) E(X) - e(-\eta/X)E(1) + \frac{\eta}{X}\cdot \int_1^X  e\bigg( \frac{-\eta t}{X} \bigg) E(t) dt \bigg) \\
&\ll_A X/(\log X)^A ,
\end{align*} 
since $s, |\eta| \leq (\log X)^B$. Hence, bounding the Fourier transform $\hat{1}_{\CA}$ trivially and summing over $s,\eta$, we get an error term of $\ll X/(\log X)^B$ coming from $E(X;s)$. By Lemma \ref{prime_number_theorem_in_short_int}, we get by splitting into short intervals,
\begin{align*}
\sum_{n\leq X} r_2(n) e\bigg(\frac{-\eta n }{X} \bigg) &= \sum_{t\ll X} \sum_{t < n \leq t + X/\lfloor (\log X)^{5B}\rfloor} r_2(n) e\bigg(\frac{-\eta n }{X} \bigg) \\
&= \sum_{t\ll X} e\bigg( \frac{-\eta t}{X} \bigg) \sum_{t<n\leq t+X/\lfloor (\log X)^{5B} \rfloor} r_2(n) + O_B \bigg( \frac{X}{(\log X)^{4B}} \bigg) \\
&= \frac{\pi}{4} \frac{X}{(\log X)^{5B}} \sum_{t\ll X} e\bigg( \frac{-\eta t}{X} \bigg) + O_B\bigg( \frac{X}{(\log X)^{4B}} \bigg) \\
&= \frac{\pi}{4} \frac{X}{(\log X)^{5B}} \sum_{0\leq m < \lfloor (\log X)^{5B}\rfloor}e\bigg( \frac{-\eta m}{\lfloor (\log X)^{5B}\rfloor} \bigg) + O_B \bigg( \frac{X}{(\log X)^{4B}} \bigg) \ll \frac{X}{(\log X)^{4B}},
\end{align*}
where we parametrized $t=m X/\lfloor (\log X)^{5B}\rfloor$ and also noticed the exponential sum is $0$ since $\eta\not=0$. This contributes an error term of,
$$ \ll \frac{(\log X)^{3B}}{X} \#\CA(X) \frac{X}{(\log X)^{4B}} \ll \frac{\#\CA(X)}{(\log X)^{B}}, $$
where we bounded trivially the Fourier transform $\hat{1}_{\CA}(\theta) \leq \#\CA(X)$.

\noindent \textit{Case 3. $\eta=0$ and $p|s\implies p|g$:}
This will be the main term, and we focus on the subset of $s\leq (\log X)^B$ with $p|s \implies p|g$, and $\eta=0$. Then we have
\begin{gather*}
\frac{1}{X} \sum_{\substack{s\leq (\log X)^B \\ p|s\implies p|g \\ (r,s)=1}} \frac{1}{\varphi^2(s)} \sum_{c\in \CA} \sum_{\nu \mod s} e\bigg(\frac{-r(\nu-c)}{s} \bigg) \rho(\nu; s) \sum_{n\leq X} r_2(n)
\end{gather*}
We can take out the sum over $n\leq X$ since there is no dependence on it elsewhere. For the remaining sum, we note that for any choice of $J \leq (\log\log X)/\log 2$, we focus on $s|g^J$. So, we have
\begin{gather*}
\frac{\pi}{4} \cdot \sum_{\substack{s|g^J \\ (r,s)=1}} \frac{1}{\varphi^2(s)} \sum_{c\in \CA} \sum_{\nu \mod s} e\bigg(\frac{-r(\nu-c)}{s} \bigg) \rho(\nu; s) + O_A\bigg( \frac{\#\CA(X)}{(\log X)^A} \bigg).
\end{gather*}
Now, we write $s=s_1\cdots s_{\ell}$ where $g=g_1\cdots g_{\ell}$ where $g_i$ are pairwise coprime prime powers and $s_i | g_i^J$. In this case, we can further write $r=\sum_{i=1}^{\ell}r_i \prod_{j\not=i} s_j$ for some unique $(r_i,s_i)=1$ for $i\in [\ell]$. Similarly, we can write $\nu=\sum_{i=1}^{\ell} \nu_i \prod_{j\not= i} s_j$ for some unique $\nu_i \mod{s_i}$. This gives us
\begin{align*}
\sum_{\substack{s| g^J \\ (r,s)=1}} \frac{1}{\varphi(s)} \sum_{\nu \mod s} \frac{\rho(\nu; s)}{\varphi(s)} \sum_{c\in \CA}&e\bigg(\frac{-r(\nu-c)}{s} \bigg)  \\ &= \sum_{c\in \CA} \prod_{i=1}^{\ell}\sum_{\substack{s_i| g_i^J \\ (r_i,s_i)=1}} \frac{1}{\varphi(s_i)} \sum_{\nu_i \mod{s_i}} \frac{\rho(\mu_i\nu_i; s_i)}{\varphi(s_i)} e\bigg(\frac{-r_i(\mu_i\nu_i-c)}{s_i} \bigg),
\end{align*}
where we denote $\mu_i :=\prod_{j\not=i}s_j$. Then we can write the individual sum for $s_i$ ($g_i$ coprime to $2$) as
\begin{align*}
 \sum_{\substack{s_i| g_i^J \\ (r_i,s_i)=1}} \frac{1}{\varphi(s_i)} \sum_{\nu_i \mod{s_i}} \frac{\rho(\mu_i\nu_i; s_i)}{\varphi(s_i)} &e\bigg(\frac{-r_i(\mu_i\nu_i-c)}{s_i} \bigg) \\
 &= 1 + \sum_{\substack{s_i| g_i^J \\ s_i>1 \\ (r_i,s_i)=1}} \frac{1}{\varphi(s_i)} \frac{s_i}{g_i^J} \sum_{\nu_i \mod{g_i^J}} \frac{\rho(\mu_i\nu_i;s_i)}{\varphi(s_i)} e\bigg(\frac{-r_i (\mu_i\nu_i-c)}{s_i} \bigg)  \\
&= 1 + \sum_{\substack{s_i| g_i^J \\ s_i>1 \\ (r_i,s_i)=1}} \frac{1}{\varphi(g_i^J)}\sum_{\nu_i \mod{g_i^J}} \frac{\rho(\mu_i\nu_i;g_i^J)}{\varphi(g_i^J)} e\bigg(\frac{-r_i (\mu_i\nu_i-c)}{s_i} \bigg) \\
&= \sum_{\substack{s_i| g_i^J \\ (r_i,s_i)=1}} \sum_{\nu_i \mod{g_i^J}} \frac{\rho(\mu_i\nu_i;g_i^J)}{\varphi^2(g_i^J)} e\bigg(\frac{-r_i (\mu_i\nu_i-c)}{s_i} \bigg)
\end{align*}
where to get the first line we used the fact that $\nu \to \nu + ks_i$ for $0\leq k < g_i^J/s_i$ gives the same residue class $\bmod s_i$. For the second line, we use the fact that $\rho(\nu;s_i)/\varphi(s_i)$ is independent of powers of $s_i$ by Lemma \ref{rho_function_properties} and properties of $\varphi(s_i)$. Same idea for $s_i/\varphi(s_i) = g_i^J/\varphi(g_i^J)$. For the last line, we add back the $s_i=1$ to the sum since $(g_i,\mu_i)=1$. The case when one of $g_i$ is power of $2$ is done in the same way, but we take out the cases $s_i=2, 4$ just like $s_i=1$, and then add it back at the end. This can be done because $\rho(\nu;2^a)$ is supported on $\nu\equiv 2\mod 4$.

Now, we glue back the $s_i$ sums as well as the $r_i$ and $\nu_i$. This gives us
$$ \sum_{c\in \CA} \sum_{\substack{s|g^J \\ (r,s)=1}} \sum_{\nu \mod{g^J}} \frac{\rho(\nu;g^J)}{\varphi^2(g^J)} e\bigg( \frac{-r(\nu-c)}{s} \bigg), $$
where we used the fact that $\rho(\mu_i\nu_i;g_i^J)=\rho(\nu;g_i^J)$ since $\nu\equiv \mu_i\nu_i \mod{g_i^J}$, and the multiplicativity of $\rho(\nu;s)$.
Writing $r/s=\ell/g^J$ to get
$$\sum_{\nu \mod{g^J}}\frac{\rho(\nu;g^J)}{\varphi^2(g^J)} \sum_{c\in \CA} \sum_{0\leq \ell <g^J}e\bigg(\frac{-\ell (\nu-c)}{g^J} \bigg) .$$
We note that
$$ \sum_{0\leq \ell <g^J}e\bigg(\frac{-\ell (\nu-c)}{g^J} \bigg) = \begin{cases}
    g^J & \text{if } c\equiv \nu \mod{g^J} \text{ and } \nu=\sum_{i=0}^Ja_ig^i \text{ with } a_i\not=b ,\forall i \\
    0 & \text{otherwise}.
\end{cases} $$
Now, using this identity, we get
$$ \sum_{\substack{\nu \mod{g^J} \\ \nu=\sum_{i=0}^{J-1} a_ig^i , a_i\not=b, \forall i}} g^J \cdot \frac{\rho(\nu;g^J)}{\varphi^2(g^J)} \sum_{\substack{c\in \CA \\ c\equiv \nu \mod{g^J}}} 1 .$$
We can count the sum over missing digits in this case as $(g-1)^{k-J}$, which gives us at the end
$$ \frac{\pi}{4} \cdot \sum_{\substack{s| g^J \\ (r,s)=1}} \frac{1}{\varphi(s)} \sum_{\nu \mod s} \frac{\rho(\nu; s)}{\varphi(s)} \sum_{c\in \CA}e\bigg(\frac{-r(\nu-c)}{s} \bigg)= \mathfrak{S}(b,g) \cdot \frac{\pi}{4} \#\CA(X) ,$$
where
\begin{align*}
\mathfrak{S}(b,g) = \frac{g^J}{(g-1)^J} \cdot \sum_{\substack{\nu \mod{g^J} \\ \nu=\sum_{i=0}^{J-1} a_ig^i , a_i\not=b,\forall i}} \frac{\rho(\nu;g^J)}{\varphi^2(g^J)} &= \frac{1}{(g-1)^J} \cdot \sum_{\substack{\nu \mod{g^J} \\ \nu=\sum_{i=0}^{J-1} a_ig^i , a_i\not=b,\forall i}} \frac{g \cdot \rho(\nu;g)}{\varphi^2(g)} \\
&= \frac{g}{g-1} \bigg( 1 - \frac{\rho(b;g)}{\varphi^2(g)} \bigg),
\end{align*}
where we used the fact that $g^J\rho(\nu;g^J)/\varphi^2(g^J)$ is independent of the power of $g$ to get the first line, and noticing that $\rho(\nu;g)$ only depends on the last digit of $\nu$ in base $g$, hence if $\nu=\sum_{i=0}^{J-1} a_i g^i$, summing over $0\leq a_i<g$ for $0<i\leq J-1$, we get a $(g-1)^{J-1}$ factor. The remaining sum over $0\leq a_0<g$ gives the factor with $\rho(b;g)$.

\subsection{Minor Arcs}
For the minor arcs, we put $s$ and $\eta$ into dyadic intervals. Hence, we have $(\log X)^B \leq DS < \sqrt{X}$ with $S=2^i, D=2^j$, and we are studying
$$ \frac{1}{X} \sum_{\substack{S<s\leq 2S \\ (r,s)=1}} \sum_{\substack{D<|\eta| \leq 2D \\ Xr/s+\eta\in \Z}} \hat{1}_{\CA}\bigg(\frac{r}{s}+ \frac{\eta}{X}\bigg) S\bigg(\frac{-r}{s} + \frac{-\eta}{X}\bigg). $$
Now, for $D\ll 1$ or $S\gg X^{1/4}$, we use Lemma \ref{exponential_sum_of_r_2}[(ii)] on $S(\theta)$ and Lemma \ref{hybrid_bound_for_fourier_transform} on the remainder average of the Fourier transform $\hat{1}_{\CA}(\theta)$, \textit{i.e.}
$$ \sum_{\substack{S<s\leq 2S \\ (r,s)=1}} \sum_{\substack{D<|\eta| \leq 2D \\ Xr/s+\eta\in \Z}} \bigg| \hat{1}_{\CA}\bigg( \frac{r}{s} + \frac{\eta}{X}\bigg)\bigg| \ll \#\CA(X) (DS^2)^{\alpha_g}, $$
to get
\begin{align*}
&\ll \#\CA(X) (\log X) \bigg( \frac{(DS^2)^{192\alpha_g}}{S} + \frac{(DS^2)^{192\alpha_g}}{X^{1/3}} + \frac{(DS^2)^{192\alpha_g}S}{X^2} \bigg)^{1/192}.
\end{align*}
For $D\ll 1$, we use $(\log X)^B\ll S\ll \sqrt{X}$ to get
\begin{align*}
&\ll \#\CA(X) (\log X) \bigg( \frac{1}{S^{1-384\alpha_g}} + \frac{S^{384\alpha_g}}{X^{1/3}} + \frac{S^{384\alpha_g}}{X^{3/2}} \bigg)^{1/192} \\
&\ll \frac{\#\CA(X)}{(\log X)^{-1+(1-384\alpha_g)B/384}},
\end{align*}
since $\alpha_g \to 0$ as $g\to \infty$, we can further bound this by $\ll \#\CA(X) / (\log X)^{(B-385)/384}$ for $g$ large enough. In the case of $S\gg X^{1/4}$, we use $DS^2\ll X$ to get
\begin{align}
&\ll \#\CA(X) (\log X) \bigg( \frac{(DS^2)^{192\alpha_g}}{X^{1/4}} + \frac{(DS^2)^{192\alpha_g}}{X^{1/3}} + \frac{(DS^2)^{192\alpha_g}}{X^{3/2}} \bigg)^{1/192} \notag \\
&\ll \frac{\#\CA(X) (\log X)}{X^{(1/4-192\alpha_g)/192}},\label{worst_bound}
\end{align}
and again since $\alpha_g\to 0 $ as $g\to\infty$, we get that this contributes $\ll \#\CA(X) X^{-1/1000}$.

For the other cases $D\gg 1$ and $S\ll X^{1/4}$, we use Lemma \ref{exponential_sum_of_r_2}[(i)] instead (with $\beta=\eta/X$) to get
\begin{align*}
\frac{1}{X} \sum_{\substack{S<s\leq 2S \\ (r,s)=1}} &\sum_{\substack{D<|\eta| \leq 2D \\ Xr/s+\eta\in \Z}} \hat{1}_{\CA}\bigg(\frac{r}{s}+ \frac{\eta}{X}\bigg) S\bigg(\frac{-r}{s} + \frac{-\eta}{X}\bigg) \\
&\ll \frac{\#\CA(X)}{X} \cdot (DS^2)^{\alpha_g} \bigg(X^{15/16+\varepsilon} + \frac{X}{S^{1/2-\varepsilon}D^{1/2}} \bigg) (\log X)^c.
\end{align*}
We can bound $D^{-1/2} \ll D^{-1/2+\varepsilon}$ and $(DS^2)^{\alpha_g} \leq (DS)^{2\alpha_g}$. These together, along with $DS^2 \ll X$, gives us
\begin{align*}
\ll \#\CA(X) \bigg( X^{-1/16+\alpha_g} + (DS)^{-1/2+\varepsilon + 2\alpha_g} \bigg) (\log X)^c
\end{align*}
Now, as before, since $\alpha_g \to 0$ as $g\to\infty$, we have for sufficiently large $g$ that $-1/4 +\varepsilon +2\alpha_g \leq -1/3$ and $-1/16 + \alpha_g/2 \leq -1/17$. Since $DS\gg (\log X)^B$, we get that we have $\ll \#\CA(X) /(\log X)^{(B-3c)/3}$ in this case. 

Thus, the minor arc contribution in total can be bounded by $\ll \#\CA(X)/(\log X)^{\min\{(B-3c)/3,(B-1153)/384\}}$, and since $B$ is sufficiently large and arbitrary, we get the result.

\section{Off-Diagonal Solutions}\label{off_diagonal_section}
To understand Theorem \ref{off-diagonal_result}, we first get rid of the following quantity
$$ \sum_{\substack{n\in \CA(X) \\ n\leq X^{3/4}}} (r_*^2(n) - 2r_*(n)) \ll \sum_{n\leq X^{3/4}} \tau^2(n) \ll_{\varepsilon} X^{3/4+\varepsilon}, $$
since $r_*(n)\leq \tau(n)$. Moreover, since our sum can be written in the following form
$$ \sum_{\substack{n\in \CA(X) \\ n>X^{3/4}}} (r_*^2(n)-2r_*(n)) = \sum_{\substack{p_1^2+q_1^2 = p_2^2+q_2^2 \in \CA(X) \\ p_1^2+q_1^2 > X^{3/4} \\ \{p_1,q_1\}\not=\{p_2,q_2\}}} 1. $$
If one of $p_i$ or $q_i$ divides $g$, then we have the bound
$$ \leq \sum_{\substack{p_1^2+q_1^2 \leq X \\ p_1|3g }} r_*(p_1^2+q_1^2) \ll_{\varepsilon, g} X^{1/2+\varepsilon}  ,$$
since $r_*(n) \ll_{\varepsilon} n^{\varepsilon}$. Next, we follow an idea of the author \cite{MR4756119}, which we give here. It starts with the following observation. We want to understand the following sum,
$$\sum_{\substack{p_1^2+q_1^2 = p_2^2+q_2^2 \in \CA(X) \\ p_1^2+q_1^2 > X^{3/4} \\ (p_1p_2q_1q_2,3g)=1 \\ \{p_1,q_1\}\not=\{p_2,q_2\}}} 1. $$
Hence, we can split $p_1^2+q_1^2 = p_2^2+q_2^2$ in $\Z[i]$ to get
\begin{align*}
p_1+iq_1 = (a+ib)(c+id) \\
p_2+iq_2 = (a+ib)(c-id),
\end{align*}
up to some unit factors, which we can disgard without loss of generality by just focusing on this case.
Hence, using this factorization, we can get an upper bound
$$ \sum_{n\in \CA(X)} (r_*^2(n)-2r_*(n)) \ll_{\varepsilon}  \sum_{\substack{(a^2+b^2)(c^2+d^2) \in \CA(X) \\ (a^2+b^2)(c^2+d^2)> X^{3/4} \\ (ac-bd), (ac+bd), (ad+bc), (ad-bc) \in \mathbb{P}}} 1  + X^{3/4+\varepsilon}, $$
where $\mathbb{P}$ is the set of primes coprime to $3g$.
We can further assume without loss of generality that $(a^2+b^2)\leq (c^2+d^2)$. Thus, we get
$$ \ll \sum_{a^2+b^2 \leq \sqrt{X}} \sum_{\substack{X^{3/4}/(a^2+b^2) < c^2+d^2 \leq X/(a^2+b^2) \\ (a^2+b^2)(c^2+d^2) \in \CA \\ (ac-bd), (ac+bd), (ad+bc), (ad-bc) \in \mathbb{P}}} 1 .$$
Next, we use Lemma \ref{sieve_lemma} to get rid of the primality condition in the inner sum and bound this by
\begin{equation}\label{main_eq}
 \sum_{a^2+b^2 \leq \sqrt{X}}  \sum_{\substack{h\leq H \\ h|P(z)}} \lambda_h \sum_{\substack{v_1,v_2 \mod h \\ (av_1 - bv_2)(av_1 + bv_2)(av_2 + b v_1)(av_2 - bv_1) \equiv 0 \mod h}} \sum_{\substack{X^{3/4}/(a^2+b^2) < c^2+d^2 \leq X/(a^2+b^2) \\ (a^2+b^2)(c^2+d^2) \in \CA  \\  (c,d)\equiv (v_1,v_2) \mod h }} 1 , 
\end{equation}
where $z=\exp(\frac{c_g^{1/2}}{3500000}(\log X)^{1/2})$, and $H=z^{70}$ with $c_g$ is defined as in Lemma \ref{Linfty_bound_for_fourier_transform}. Recall that we also have $(h,g)=1$ since $P(z)=\prod_{\substack{p<z \\ p\nmid g}}p$. 
We study the inner sum, and to start we apply the Fourier inversion to get
$$ \sum_{\substack{c^2+d^2 \leq X/M \\ (a^2+b^2)(c^2+d^2) \in \CA  \\  (c,d)\equiv (v_1,v_2) \mod h }} 1 = \frac{1}{X} \sum_{\tilde{a}\leq X} \hat{1}_{\CA} \bigg( \frac{\tilde{a}}{X}\bigg) \tilde{S}\bigg( \frac{-\tilde{a}}{X} ; v_1,v_2 \bigg) ,$$
where
$$ \tilde{S}(\theta ; u,v) = \sum_{\substack{X^{3/4}/(a^2+b^2) < c^2+d^2\leq X/(a^2+b^2) \\  (c,d)\equiv (u,v)\mod h}} e((a^2+b^2)(c^2+d^2) \theta) . $$
Now, we split into major and minor arcs. Note that by the Dirichlet approximation theorem, we can write $\frac{\tilde{a}}{X} = \frac{r}{s} + \xi$ for $s\leq \sqrt{X}$ with $|\xi| \leq 1/s\sqrt{X}$. So, we can define for $\eta_g = c_g^{1/2}/1000$,
$$ \M_{r,s} := \bigg\{ \alpha \in [0,1] : \bigg|\alpha - \frac{r}{s} \bigg| \leq \frac{\exp(\eta_g\cdot (\log X)^{1/2})}{X} \bigg\} . $$
Then, major arcs are defined as
$$ \M := \bigcup_{\substack{s\leq \exp(\eta_g\cdot (\log X)^{1/2})\\ (r,s)=1}} \M_{r,s}, $$
and minor arcs are
$$ \m := [0,1]\setminus \M. $$

\subsection{Major Arcs}
For this part, we focus on getting the main term for the inner sum in \eqref{main_eq}. Later on, we will sum it over $h$, and $a^2+b^2$ to get the actual main term. For ease of notation, we write $M:=a^2+b^2$.

For the major arcs, we have $\frac{\tilde{a}}{X} = \frac{r}{s} + \frac{\eta}{X}$ with $|\eta| \ll \exp(\eta_g\cdot (\log X)^{1/2}) $. We again split into 3 cases:

\noindent \textit{Case 1. $\exists p|s$ but $p\nmid g$:} 
In this case by Lemma \ref{Linfty_bound_for_fourier_transform}, we get 
\begin{align*}
\frac{1}{X} \sum_{\substack{s\leq \exp(\eta_g\cdot (\log X)^{1/2}) \\ \exists p|s, p\nmid g \\ (r,s)=1}} \sum_{\substack{|\eta|\leq \exp(\eta_g\cdot (\log X)^{1/2}) \\ Xr/s + \eta\in \Z}}&\hat{1}_{\CA}\bigg(\frac{r}{s}+ \frac{\eta}{X}\bigg) \tilde{S}\bigg(\frac{-r}{s} + \frac{-\eta}{X}; v_1,v_2\bigg) \\
&\ll \frac{\#\CA(X)}{Mh^2} \sum_{\substack{s\leq \exp(\eta_g\cdot (\log X)^{1/2}) \\ \exists p|s, p\nmid g \\ (r,s)=1}}\sum_{\substack{|\eta|\leq \exp(\eta_g\cdot (\log X)^{1/2}) \\ Xr/s + \eta\in \Z}} X^{-c_g/\log s} \\
&\ll _{\varepsilon}\frac{\#\CA(X)}{Mh^2}\cdot \exp( - 100 c_g^{1/2} (\log X)^{1/2}) ,
\end{align*}
by trivially bounding $\tilde{S}(-r/s-\eta/X;v_1,v_2) \ll X/h^2M$ since $h \leq H$ and $H$ is very small.

\noindent \textit{Case 2. $\eta\not=0$:} Now, we need to understand
\begin{align*}
\tilde{S}\bigg( \frac{-a}{X} ; v_1,v_2 \bigg) &= \sum_{\ell_1,\ell_2 \mod s} e\bigg( \frac{-rM(\ell_1^2+\ell_2^2)}{s} \bigg) \sum_{\substack{ X^{3/4}/M < c^2+d^2 \leq X/M \\ (c,d)\equiv (v_1,v_2)\mod h \\ (c,d)\equiv (\ell_1,\ell_2) \mod s}} e\bigg( \frac{-\eta M (c^2+d^2)}{X} \bigg) \\
&= \sum_{\ell_1,\ell_2 \mod s} e\bigg( \frac{-rM(\ell_1^2+\ell_2^2)}{s} \bigg) \sum_{\substack{c^2+d^2 \leq X/M \\ (c,d)\equiv (C,D)\mod{[h,s]}}} e\bigg( \frac{-\eta M (c^2+d^2)}{X} \bigg) + O\bigg( \frac{X^{3/4}}{M}  \bigg) ,
\end{align*}
by the Chinese Remainder Theorem, completing the sum over $c^2+d^2$ and using Lemma \ref{sum_of_squares_in_short_int_and_aps} with the fact that $s \leq \exp(\eta_g \cdot (\log X)^{1/2})$ is very small. Next, we split the $c^2+d^2$ sum into short intervals to get (for $Y:=\lfloor (X/M)^{1/3}\rfloor$)
\begin{align*}
&\sum_{0\leq m< Y} \sum_{\substack{mX/YM< c^2+d^2 \leq (m+1) X/YM \\ (c,d)\equiv (C,D)\mod{[h,s]}}} e\bigg( \frac{-\eta M(c^2+d^2)}{X} \bigg)  \\
&= \sum_{0\leq m< Y}e\bigg( \frac{-\eta m}{Y} \bigg) \sum_{\substack{mX/YM< c^2+d^2 \leq (m+1)X/YM \\ (c,d)\equiv (C,D)\mod{[h,s]}}} 1 + O\bigg( \frac{|\eta|\cdot  X^{1/3}}{M^{1/3}} \bigg) \\
&= \frac{1}{[h,s]^2} \frac{X}{YM} \sum_{0\leq m< Y}e\bigg( \frac{-\eta m}{Y} \bigg) + O\bigg( \frac{X^{5/6}}{M^{5/6}} \bigg) \ll  \frac{X^{5/6}}{M^{5/6}},
\end{align*}
by Lemma \ref{sum_of_squares_in_short_int_and_aps}, and since the sum over $m$ is 0. Hence, bounding the Fourier transform $\hat{1}_{\CA}(\theta)$ trivially and since $X^{3/4}/M \leq X^{5/6}/M^{5/6}$, we get
$$ \frac{1}{X} \sum_{\substack{s\leq \exp(\eta_g\cdot (\log X)^{1/2}) \\ (r,s)=1}} \sum_{\substack{0<|\eta|\leq \exp(\eta_g\cdot (\log X)^{1/2}) \\ Xr/s + \eta\in \Z}}\hat{1}_{\CA}\bigg(\frac{r}{s}+ \frac{\eta}{X}\bigg) \tilde{S}\bigg(\frac{-r}{s} + \frac{-\eta}{X}; v_1,v_2\bigg) \ll \frac{\#\CA(X)}{X^{1/6-\varepsilon}M^{5/6}} .$$

\noindent \textit{Case 3. $\eta=0$ and $p|s \implies p|g$:} This part will give us the main term. We have similar to the Section \ref{average_section},
$$  \frac{1}{X} \sum_{\substack{s\leq \exp(\eta_g\cdot (\log X)^{1/2}) \\ p|s\implies p|g \\ (r,s)=1}}\hat{1}_{\CA}\bigg(\frac{r}{s}\bigg) \sum_{\ell_1,\ell_2 \mod s } e\bigg( \frac{-rM(\ell_1^2+\ell_2^2)}{s} \bigg) \sum_{\substack{X^{3/4}/M < c^2+d^2\leq X/M \\ (c,d)\equiv (C,D)\mod{hs}}} 1, $$
since $(h,g)=1$ and so $(h,s)=1$. By Lemma \ref{sum_of_squares_in_short_int_and_aps}, we can get that this is
$$ \frac{\pi}{4}\frac{1}{Mh^2} \sum_{\substack{s\leq \exp(\eta_g\cdot (\log X)^{1/2}) \\ p|s\implies p|g \\ (r,s)=1}}\hat{1}_{\CA}\bigg(\frac{r}{s}\bigg) \sum_{\ell_1,\ell_2 \mod s}\frac{1}{s^2} e\bigg( \frac{-rM(\ell_1^2+\ell_2^2)}{s} \bigg) + O_{\varepsilon}\bigg( \frac{\#\CA(X)}{X^{1/2-\varepsilon}\sqrt{M}} + \frac{\#\CA(X)}{X^{1/4-\varepsilon} M} \bigg)  . $$
 Now, we can write $\nu=\ell_1^2+\ell_2^2$ to get
$$ \frac{\pi}{4}\frac{1}{Mh^2} \sum_{\substack{s\leq \exp(\eta_g\cdot (\log X)^{1/2}) \\ p|s\implies p|g \\ (r,s)=1}}\hat{1}_{\CA}\bigg(\frac{r}{s}\bigg) \sum_{\nu \mod s}\frac{r(\nu ;s)}{s^2} e\bigg( \frac{-rM\nu}{s} \bigg) + O_{\varepsilon}\bigg( \frac{\#\CA(X)}{X^{1/4-\varepsilon}M} \bigg)  , $$
where
$$ r(\nu ;s) := \sum_{\ell_1, \ell_2 \mod s} 1_{\nu\equiv \ell_1^2+\ell_2^2 \mod s}. $$
Now, since $p|s\implies p|g$, we just need to study for any choice of $J\ll_g (\log X)^{1/2}$ (The bound on $J$ comes from the fact that $g^J\leq \exp(\eta_g \cdot (\log X)^{1/2})$.), the following
\begin{equation}\label{getting_rid_of_s}
 \sum_{\substack{s|g^J \\ (r,s)=1}}\hat{1}_{\CA}\bigg(\frac{r}{s}\bigg) \sum_{\nu \mod s}\frac{r(\nu ;s)}{s^2} e\bigg( \frac{-rM\nu}{s} \bigg).
\end{equation}

Next, we note that for a given $(a,b)\in (\Z/s\Z)^2$ with $a^2+b^2\equiv \nu \mod s$, we have $(g^J/s)^2$ distinct choices for $(k_1,k_2)$ such that $(a+k_1s,b+k_2s)\in (\Z/g^J\Z)^2$ with $(a+k_1s)^2+(b+k_2s)^2 \equiv \mu \mod{g^J}$ for some $\mu\equiv \nu \mod s$. Hence, we have
$$ r(\nu;s)= \frac{s^2}{g^{2J}} \sum_{\substack{\mu\mod{g^J} \\ \mu \equiv \nu \mod s}} r(\mu;g^J) .$$
Putting this in \eqref{getting_rid_of_s} and changing the order of summations, we get
$$ \frac{\pi}{4}\frac{1}{Mh^2} \sum_{\mu \mod{g^J}} \frac{r(\mu;g^J)}{g^{2J}} \sum_{\substack{s|g^J \\ (r,s)=1}}\hat{1}_{\CA}\bigg(\frac{r}{s}\bigg) e\bigg( \frac{-rM\mu}{s} \bigg) . $$
Opening up the Fourier transform $\hat{1}_{\CA}$ and changing the order of summation we get a sum over $s$ of $e(r(c - M\mu)/s)$ where $c\in\CA(X)$. Writing $r/s=\ell/g^J$, we get the sum
$$ \sum_{0\leq \ell < g^J} e\bigg(\frac{r(c - M\mu)}{s}\bigg) = \begin{cases}
    g^J & \text{if } c\equiv M\mu \mod{g^J} \text{ and } M\mu=\sum_{i=0}^{k-1}a_ig^i \text{ with } a_i\not=b ,\forall i< J \\
    0 & \text{otherwise}.
\end{cases}  $$
Putting this in, we get
$$ \eqref{getting_rid_of_s} = \frac{\pi}{4} \frac{1}{Mh^2} \frac{1}{g^J}\sum_{\substack{\mu \mod{g^J} \\ M\mu \equiv \sum_{i=0}^{J-1}a_ig^i \mod{g^J} \text{ with } a_i\not=b, \forall i}} r(\mu ; g^J) \sum_{\substack{c\in \CA(X) \\ c\equiv \mu M \mod{g^J}}} 1 . $$
Similar to Section \ref{average_section}, the inner sum over $c$ is $(g-1)^{k-J}$. Thus, we get that this is
$$  \frac{\pi}{4} \frac{\#\CA(X)}{Mh^2} \frac{1}{g^J(g-1)^J}\sum_{\substack{\mu \mod{g^J} \\ M\mu = \sum_{i=0}^{k-1}a_ig^i \text{ with } a_i\not=b, \forall i<J}} r(\mu ; g^J) .$$

Thus, the major arc contribution in total gives us, for any $J \ll_g (\log X)^{1/2}$,
\begin{align}
\sum_{\substack{\tilde{a}\leq X \\ \tilde{a}/X \in \M}} \hat{1}_{\CA}\bigg(\frac{\tilde{a}}{X}\bigg) \tilde{S}\bigg( \frac{-\tilde{a}}{X} ; v_1 , v_2 \bigg) = \frac{\pi}{4} \frac{\#\CA(X)}{Mh^2} \frac{1}{g^J(g-1)^J}&\sum_{\substack{\mu \mod{g^J} \\ M\mu \equiv \sum_{i=0}^{J-1}a_ig^i \mod{g^J} \text{ with } a_i\not=b, \forall i}} r(\mu ; g^J) \label{major_arc_contr}  \\
&+ O\bigg( \frac{\#\CA(X)}{Mh^2} \exp\bigg( - 100 c_g^{1/2} (\log X)^{1/2}  \bigg) \bigg), \notag
\end{align}
since $M\leq \sqrt{X}$ and summing over $M=a^2+b^2$ would give us that this is the worst possible error term.

\subsection{Minor Arcs}
In this part, we will get an error term by combining the $a^2+b^2$ and $c^2+d^2$ sums and getting a form of a double quadratic exponential sums.

We start by putting $s$ and $\eta$ into dyadic intervals. This gives us
\begin{align*}
\sum_{a^2+b^2 \leq \sqrt{X}}  \sum_{h\leq H} \lambda_h &\sum_{\substack{v_1,v_2 \mod h \\ (av_1 - bv_2)(av_1 + bv_2)(av_2 + b v_1)(av_2 - bv_1) \equiv 0 \mod h}} \\
&\times \frac{1}{X} \sum_{\substack{S<s\leq 2S \\ (r,s)=1}} \sum_{\substack{D<|\eta|\leq 2D \\ Xr/s + \eta\in \Z}} \hat{1}_{\CA}\bigg( \frac{r}{s} + \frac{\eta}{X} \bigg) \tilde{S}\bigg(  \frac{-r}{s} + \frac{-\eta}{X} ; v_1,v_2 \bigg) .
\end{align*}
Then we put $a^2+b^2$ into dyadic intervals and change the order of summations to put it inside by forcing $a$ and $b$ into arithmetic progressions, \textit{i.e.} we have
\begin{align}
\leq  \sum_{h\leq H}&\sum_{\substack{u_1,u_2,v_1,v_2 \mod h \\ (u_1v_1 - u_2v_2)(u_1v_1 + u_2v_2)(u_1v_2 + u_2v_1)(u_1v_2 - u_2v_1)  \equiv 0 \mod h}} \notag \\
&\times\frac{1}{X} \sum_{\substack{S<s\leq 2S \\ (r,s)=1}} \sum_{\substack{D<|\eta|\leq 2D \\ Xr/s + \eta\in \Z}} \hat{1}_{\CA}\bigg( \frac{r}{s} + \frac{\eta}{X} \bigg) \sum_{N=2^i \ll \sqrt{x}} \sum_{\substack{N <a^2+b^2 \leq 2N \\ (a,b)\equiv (u_1,u_2)\mod h }}S\bigg(  \frac{-r}{s} + \frac{-\eta}{X} ; v_1,v_2 \bigg) . \label{minor_arc_part}
\end{align}
Now, we focus on the \eqref{minor_arc_part} part only. We open up $S(\theta;u,v)$, and put $(c,d)$ in dyadic intervals $(M_1,2M_1]\times (M_2,2M_2]$ for some real numbers $M_1,M_2$ with $\#\{M_1, M_2\} \ll (\log X)^2$ to get
\begin{align*}
\sum_{\substack{N <a^2+b^2 \leq 2N \\ (a,b)\equiv (u_1,u_2)\mod h }}\tilde{S}\bigg(  \frac{-r}{s} +& \frac{-\eta}{X} ; v_1,v_2 \bigg) = \sum_{\substack{M_1 , M_2 \\ M_1^2+M_2^2 \ll X/N}} \sum_{\substack{N <a^2+b^2 \leq 2N \\ (a,b)\equiv (u_1,u_2)\mod h }} \\ 
&\times\sum_{\substack{M_1< c \leq 2M_1 \\ M_2 < d \leq 2M_2 \\ X^{3/4}/(a^2+b^2)<c^2+d^2 \leq X/(a^2+b^2) \\ (c,d)\equiv (v_1,v_2) \mod h}} e\bigg(  \bigg( \frac{-r}{s} + \frac{-\eta}{X} \bigg)(a^2+b^2)(c^2+d^2)\bigg). 
\end{align*}
Now, we can put $n=a^2+b^2$, and this tells us the inner sum is of the form
$$ \sum_{N<n\leq 2N} a_n \sum_{\substack{M_1< c \leq 2M_1 \\ M_2<d\leq 2M_2 \\ X^{3/4}/(a^2+b^2)<c^2+d^2 \leq X/(a^2+b^2)  \\ (c,d)\equiv (v_1,v_2)\mod h}} e\bigg(  \bigg( \frac{-r}{s} + \frac{-\eta}{X} \bigg) (c^2+d^2)n \bigg) ,$$
which we can bound by Lemma \ref{quadratic_exponential} with $\delta=1/8$ since $c^2+d^2 \gg X^{3/4}/(a^2+b^2) \gg X^{1/4}$. This, alongside Lemma \ref{hybrid_bound_for_fourier_transform}, gives us for $D\gg 1$,
\begin{align*}
\eqref{minor_arc_part} &\ll \frac{h^{1/2}}{X} \#\CA(X) (S^2 D)^{\alpha_g} \bigg( X^{31/32}  + \frac{X}{(SD)^{1/4}} \bigg) (\log X)^6 \\
&\ll h^{1/2} \#\CA(X) \bigg( \frac{(S^2D)^{\alpha_g}}{X^{1/32}} + \frac{(S^2D)^{\alpha_g}}{(SD)^{1/4}} \bigg) (\log X)^6 \\
&\ll h^{1/2} \#\CA(X) ( X^{-1/32 + \alpha_g} + (S D)^{-1/4+2\alpha_g})(\log X)^6,
\end{align*}
since $S^2D\ll X$ and $S^2D\leq (S D)^2$. Next, since $\alpha_g\to 0$ as $g\to\infty$, we get that $X^{-1/32 +\alpha_g} \ll X^{-1/100}$ and using $SD\gg \exp(\eta_g\cdot (\log X)^{1/2})$, we have $(SD)^{-1/4+2\alpha_g} \ll \exp(-\frac{\eta_g}{5} (\log X)^{1/2})$. 

In the case $D\ll 1$, we can use the second bound from Lemma \ref{quadratic_exponential}. In this case, we get
\begin{align*}
\eqref{minor_arc_part} &\ll \frac{h^{1/2}}{X} \#\CA(X) S^{2\alpha_g} \bigg( X^{31/32} + \frac{X}{S^{1/4}}\bigg) (\log X)^6 \\
&\ll h^{1/2} \#\CA(X) (S^{2\alpha_g} X^{-1/32} + S^{-1/4+2\alpha_g})(\log X)^6.
\end{align*}
As before, since $\alpha_g\to 0$ as $g\to\infty$, and $S\gg \exp(\eta_g\cdot (\log X)^{1/2})$, we get that this is 
$$\ll h^{1/2} \#\CA(X) \exp(-\frac{\eta_g}{5} (\log X)^{1/2}) ,$$ 
just like the case $D\gg 1$. Therefore, putting them together, we got
$$ \eqref{minor_arc_part} \ll h^{1/2} \#\CA(X) \exp\bigg(- \frac{\eta_g}{5} (\log X)^{1/2}\bigg) .  $$
Summing subsequently over $u_1,u_2,v_1,v_2 \mod h$ and $h\leq H$ gives a bound of
\begin{align}
\sum_{a^2+b^2\leq \sqrt{X}}\sum_{h\leq H} \sum_{\substack{v_1,v_2 \mod h \\ (av_1 - bv_2)(av_1 + bv_2)(av_2 + b v_1)(av_2 - bv_1) \equiv 0 \mod h}}&\sum_{\substack{\tilde{a} \leq X \\ \tilde{a}/X\in \m}} \hat{1}_{\CA} \bigg( \frac{\tilde{a}}{X} \bigg) \tilde{S}\bigg( \frac{-\tilde{a}}{X} ; v_1,v_2 \bigg) \label{minor_arc_contr} \\
&\ll \#\CA(X) H^{11/2} \exp\bigg( -\frac{\eta_g}{5} (\log X)^{1/2} \bigg) . \notag
\end{align}
Before going to the next part, we note that this error is worse than the error coming from the major arc contributions.

\subsection{Putting Everything Together}
In this section, we put together the major arc and the minor arc contributions to get the final bound.

We start by noting that
$$ \rho(h; a,b) := \sum_{\substack{v_1,v_2 \mod h \\ (av_1 - bv_2)(av_1 + bv_2)(av_2 + b v_1)(av_2 - bv_1) \equiv 0 \mod h}} 1  ,$$
is a multiplicative function by the Chinese Remainder Theorem. Moreover, we have at a prime $p\not=3$
$$  \rho(p;a,b) = \begin{cases}
    p & \text{if } p=2 \text{ or } p|ab \\
    4p-3 & \text{otherwise}.
\end{cases}  $$
Thus, we get by Lemma \ref{sieve_lemma},
$$ \sum_{h\leq H} \frac{\lambda_h \cdot \rho(h;a,b)}{h^2} \ll \prod_{\substack{p\leq H \\ p\nmid 3g}} \bigg( 1 - \frac{\rho(p;a,b)}{p^2}\bigg) = \prod_{\substack{p\leq H \\ p|ab \\ p\nmid 3g}} \bigg( 1 - \frac{1}{p} \bigg) \prod_{\substack{3<p\leq H \\ p\nmid abg}} \bigg( 1 - \frac{4}{p} + \frac{3}{p^2} \bigg) . $$
We can bound the product over $p|ab$ by $\leq 1$. For the product over $p\nmid abg$, we complete the product to get
$$ \sum_{h\leq H} \frac{\lambda_h \cdot \rho(h;a,b)}{h^2} \ll \prod_{3<p\leq H} \bigg( 1 - \frac{4}{p} + \frac{3}{p^2} \bigg) \prod_{\substack{p>3 \\ p|abg}} \bigg( 1 - \frac{4}{p} + \frac{3}{p^2} \bigg)^{-1} . $$
Then we have that these products are
$$ \ll \prod_{3<p\leq H} \bigg( 1 - \frac{4}{p} \bigg) \exp\bigg(\sum_{p|abg} \frac{4}{p} \bigg). $$
Now, the exponential term can be shown to be $\ll (\log\log X)^4$ since $abg\ll_g X$ by splitting the sum over $p$ into ranges $p>\log X$ and $p\leq \log X$. For the first product, it is easy to see that it is $\ll 1/(\log H)^4$. These together give us
\begin{equation}\label{sieve_upper_bound}
 \sum_{h\leq H} \frac{\lambda_h \cdot \rho(h;a,b)}{h^2} \ll \frac{(\log\log X)^4}{(\log H)^4} .
\end{equation}
We now can use this to prove our result. We put together the major arc \eqref{major_arc_contr} and the minor arc \eqref{minor_arc_contr} contributions together to get
\begin{align*}
 \eqref{main_eq} = \frac{\pi}{4} \#\CA(X) \sum_{a^2+b^2 \leq \sqrt{X}} \frac{1}{a^2+b^2} &\sum_{\substack{\mu \mod{g^J} \\ (a^2+b^2)\mu \equiv \sum_{i=0}^{J-1}a_ig^i \mod{g^J} \text{ with } a_i\not=b, \forall i}} \frac{r(\mu ; g^J)}{g^J(g-1)^J} \\
&\sum_{h\leq H} \frac{\lambda_h \cdot \rho(h;a,b)}{h^2}  + O\bigg( \#\CA(X) H^{11/2}\exp\bigg( - \frac{c_g^{1/2}}{5} (\log X)^{1/2}  \bigg) \bigg).
\end{align*}
Using \eqref{sieve_upper_bound}, we can bound the sum over $h$. Noting that $H=\exp( \frac{c_g^{1/2}}{50000}(\log X)^{1/2})$, and putting $m=a^2+b^2$ we get
\begin{align*}
\ll \frac{\#\CA(X)(\log\log X)^4}{(\log X)^2} \sum_{m\leq \sqrt{X}} \frac{r_0(m)}{m} \sum_{\substack{\mu \mod{g^J} \\ m\mu \equiv \sum_{i=0}^{J-1}a_ig^i \mod{g^J} \text{ with } a_i\not=b, \forall i}}& \frac{r(\mu ; g^J)}{g^J(g-1)^J}  \\
&+ \#\CA(X) \exp\bigg( - \frac{c_g^{1/2}}{100} (\log X)^{1/2}  \bigg) . 
\end{align*}
Now, we put $m\mu$ into arithmetic progressions. To do this, we note that given $f\mod{g^J}$ with $f=\sum_{i=0}^{J-1}a_i g^i$ such that $a_i\not=b$ for all $i$, we are looking at $m\mu\equiv f \mod{g^J}$. Hence,
\begin{align*}
\sum_{m\leq \sqrt{X}} \frac{r_0(m)}{m} &\sum_{\substack{\mu \mod{g^J} \\ m\mu \equiv \sum_{i=0}^{J-1}a_ig^i \mod{g^J} \text{ with } a_i\not=b, \forall i}} \frac{r(\mu ; g^J)}{g^J(g-1)^J}  \\
&= \sum_{\substack{f \mod{g^J} \\ f = \sum_{i=0}^{J-1}a_ig^i \text{ with } a_i\not=b, \forall i}} \sum_{m\leq \sqrt{X}} \frac{r_0(m)}{m} \sum_{\substack{\mu\mod{g^J} \\ m\mu \equiv f \mod{g^J}}} \frac{r(\mu ; g^J)}{g^J(g-1)^J} \\
&=  \sum_{\substack{f \mod{g^J} \\ f = \sum_{i=0}^{J-1}a_ig^i \text{ with } a_i\not=b, \forall i}} \sum_{v\mod{g^J}} \sum_{\substack{\mu\mod{g^J} \\ v\mu\equiv f \mod{g^J}}} \frac{r(\mu ; g^J)}{g^J(g-1)^J} \sum_{\substack{m\leq \sqrt{X} \\ m\equiv v \mod{g^J}}} \frac{r_0(m)}{m}.
\end{align*}
The inner sum is $\ll \frac{r(v;g^J)}{g^{2J}} \log X$ by partial summation and by suming Lemma \ref{sum_of_squares_in_short_int_and_aps} over the representations of $v$ as sum of two squares $\mod{g^J}$, and since $g^J \ll_{\varepsilon} X^{\varepsilon}$. Hence, we get that this is in total
$$ \ll (\log X) \bigg( \frac{g^J}{(g-1)^J} \sum_{\substack{f \mod{g^J} \\ f = \sum_{i=0}^{J-1}a_ig^i \text{ with } a_i\not=b, \forall i}} \frac{1}{g^{4J}} \sum_{\substack{v , \mu\mod{g^J} \\ v\mu\equiv f \mod{g^J}}} r(\mu ; g^J) r(v ; g^J) \bigg) . $$
Finally, we want to understand the part with $J$. We expect it to converge to a constant as $J\to\infty$. To see this, we will do some local analysis. Firstly, we factorize $g=p_1^{e_1}\cdots p_{\ell}^{e_{\ell}}$. Then, by the Chinese Remainder Theorem, we have
\begin{align*}
&\frac{g^J}{(g-1)^J} \sum_{\substack{f \mod{g^J} \\ f = \sum_{i=0}^{J-1}a_ig^i \text{ with } a_i\not=b, \forall i}} \frac{1}{g^{4J}} \sum_{\substack{v , \mu\mod{g^J} \\ v\mu\equiv f \mod{g^J}}} r(\mu ; g^J) r(v ; g^J) \\
&=  \frac{g^J}{(g-1)^J} \sum_{\substack{f \mod{g^J} \\ f = \sum_{i=0}^{J-1}a_ig^i \text{ with } a_i\not=b, \forall i}} \prod_{i=1}^{\ell} \bigg( \frac{1}{p_i^{4Je_i}} \sum_{\substack{v_i,\mu_i \mod{p_i^{Je_i}} \\ v_i \mu_i \equiv f \mod{p_i^{Je_i}}}} r(\mu_i ; p_i^{Je_i}) r(v_i ; p_i^{Je_i})  \bigg) .
\end{align*}
Now, we use the trivial bound $r(\nu;p^k) \leq 2 p^k$ for $p$ prime and $k\geq 1$. This gives us
$$ \ll_g  \frac{g^J}{(g-1)^J} \sum_{\substack{f \mod{g^J} \\ f = \sum_{i=0}^{J-1}a_ig^i \text{ with } a_i\not=b, \forall i}} \prod_{i=1}^{\ell} \bigg( \frac{1}{p_i^{2Je_i}} \sum_{\substack{v_i,\mu_i \mod{p_i^{Je_i}} \\ v_i \mu_i \equiv f \mod{p_i^{Je_i}}}} 1 \bigg), $$
and we can bound the inner sum by $\leq p^{Je_i}$ since it is either $0$ or $p^{Je_i}$. Therefore, taking the product, we get
$$ \ll_g  \frac{1}{(g-1)^J} \sum_{\substack{f \mod{g^J} \\ f = \sum_{i=0}^{J-1}a_ig^i \text{ with } a_i\not=b, \forall i}} 1 = 1,  $$
since the sum over $f$ is $(g-1)^J$.
Therefore, we get
$$ \sum_{n\in \CA(X)} (r_*^2(n) - 2 r_*(n)) \ll_g \frac{\#\CA(X)(\log\log X)^4}{\log X} , $$
which proves Theorem \ref{off-diagonal_result}.

\nocite{*}
\bibliographystyle{plain}
\bibliography{Missing_digits_bib}

@article {MR1829558,
    AUTHOR = {Kawada, Koichi and Wooley, Trevor D.},
     TITLE = {On the {W}aring-{G}oldbach problem for fourth and fifth
              powers},
   JOURNAL = {Proc. London Math. Soc. (3)},
  FJOURNAL = {Proceedings of the London Mathematical Society. Third Series},
    VOLUME = {83},
      YEAR = {2001},
    NUMBER = {1},
     PAGES = {1--50},
      ISSN = {0024-6115,1460-244X},
   MRCLASS = {11P05 (11L15 11N36 11P32 11P55)},
  MRNUMBER = {1829558},
MRREVIEWER = {Scott\ T.\ Parsell},
       DOI = {10.1112/S0024611500012636},
       URL = {https://doi.org/10.1112/S0024611500012636},
}

@article {MR4772301,
    AUTHOR = {Granville, Andrew and Sedunova, Alisa and Sabuncu, Cihan},
     TITLE = {The multiplication table constant and sums of two squares},
   JOURNAL = {Acta Arith.},
  FJOURNAL = {Acta Arithmetica},
    VOLUME = {214},
      YEAR = {2024},
     PAGES = {499--522},
      ISSN = {0065-1036,1730-6264},
   MRCLASS = {11N37 (11N36)},
  MRNUMBER = {4772301},
MRREVIEWER = {Kam\ Hung\ Yau},
       DOI = {10.4064/aa230828-19-4},
       URL = {https://doi.org/10.4064/aa230828-19-4},
}

@article {MR2418801,
    AUTHOR = {Blomer, Valentin and Br\"udern, Jörg},
     TITLE = {Prime paucity for sums of two squares},
   JOURNAL = {Bull. Lond. Math. Soc.},
  FJOURNAL = {Bulletin of the London Mathematical Society},
    VOLUME = {40},
      YEAR = {2008},
    NUMBER = {3},
     PAGES = {457--462},
      ISSN = {0024-6093,1469-2120},
   MRCLASS = {11N36 (11P32)},
  MRNUMBER = {2418801},
MRREVIEWER = {Karin\ Halupczok},
       DOI = {10.1112/blms/bdn026},
       URL = {https://doi.org/10.1112/blms/bdn026},
}

@incollection {MR1956283,
    AUTHOR = {Vaughan, Robert C. and Wooley, Trevor D.},
     TITLE = {Waring's problem: a survey},
 BOOKTITLE = {Number theory for the millennium, {III} ({U}rbana, {IL},
              2000)},
     PAGES = {301--340},
 PUBLISHER = {A K Peters, Natick, MA},
      YEAR = {2002},
      ISBN = {1-56881-152-7},
   MRCLASS = {11P05 (11P55)},
  MRNUMBER = {1956283},
MRREVIEWER = {Scott\ T.\ Parsell},
       DOI = {10.1198/004017002320256684},
       URL = {https://doi.org/10.1198/004017002320256684},
}

@article {MR2072391,
    AUTHOR = {Liu, Jianya and Wooley, Trevor D. and Yu, Gang},
     TITLE = {The quadratic {W}aring-{G}oldbach problem},
   JOURNAL = {J. Number Theory},
  FJOURNAL = {Journal of Number Theory},
    VOLUME = {107},
      YEAR = {2004},
    NUMBER = {2},
     PAGES = {298--321},
      ISSN = {0022-314X,1096-1658},
   MRCLASS = {11P32 (11N36 11P05 11P55)},
  MRNUMBER = {2072391},
MRREVIEWER = {Koichi\ Kawada},
       DOI = {10.1016/j.jnt.2004.04.011},
       URL = {https://doi.org/10.1016/j.jnt.2004.04.011},
}

@article {MR4452438,
    AUTHOR = {Maynard, James},
     TITLE = {Primes and polynomials with restricted digits},
   JOURNAL = {Int. Math. Res. Not. IMRN},
  FJOURNAL = {International Mathematics Research Notices. IMRN},
      YEAR = {2022},
    NUMBER = {14},
     PAGES = {1--23 [10626--10648 on table of contents]},
      ISSN = {1073-7928,1687-0247},
   MRCLASS = {11A63 (11C08 11N05)},
  MRNUMBER = {4452438},
MRREVIEWER = {Y.-F.\ S.\ P\'etermann},
       DOI = {10.1093/imrn/rnab002},
       URL = {https://doi.org/10.1093/imrn/rnab002},
}

@book {MR2061214,
    AUTHOR = {Iwaniec, Henryk and Kowalski, Emmanuel},
     TITLE = {Analytic number theory},
    SERIES = {American Mathematical Society Colloquium Publications},
    VOLUME = {53},
 PUBLISHER = {American Mathematical Society, Providence, RI},
      YEAR = {2004},
     PAGES = {xii+615},
      ISBN = {0-8218-3633-1},
   MRCLASS = {11-02 (11Fxx 11Lxx 11Mxx 11Nxx)},
  MRNUMBER = {2061214},
MRREVIEWER = {K.\ Soundararajan},
       DOI = {10.1090/coll/053},
       URL = {https://doi.org/10.1090/coll/053},
}

@article {MR4680227,
    AUTHOR = {Nath, Kunjakanan},
     TITLE = {Primes with a missing digit: distribution in arithmetic
              progressions and an application in sieve theory},
   JOURNAL = {J. Lond. Math. Soc. (2)},
  FJOURNAL = {Journal of the London Mathematical Society. Second Series},
    VOLUME = {109},
      YEAR = {2024},
    NUMBER = {1},
     PAGES = {Paper No. e12837, 59},
      ISSN = {0024-6107,1469-7750},
   MRCLASS = {11N05 (11A63 11N13 11N36)},
  MRNUMBER = {4680227},
MRREVIEWER = {Vivian\ Kuperberg},
       DOI = {10.1112/jlms.12837},
       URL = {https://doi.org/10.1112/jlms.12837},
}

@article{zbMATH02517953,
 author = {Vinogradov, Ivan M.},
 title = {A new estimation of a trigonometrical sum containing primes.},
 fjournal = {Izvestiya Akademii Nauk SSSR. Seriya Matematicheskaya},
 journal = {Izv. Akad. Nauk SSSR, Ser. Mat.},
 issn = {0373-2436},
 volume = {1938},
 number = {1},
 pages = {3--14},
 year = {1938},
 language = {Russian},
 zbMATH = {2517953},
 JFM = {64.0983.01}
}

@article{Erdos1938,
author = {Erd\H{o}s, Paul},
title = {On additive properties of squares of primes. I.},
journal = {Proc. Akad. Wet. Amsterdam},
year = {1938},
number = {41},
pages = {37--41}
}

@article {MR4855348,
    AUTHOR = {Leng, James and Sawhney, Mehtaab},
     TITLE = {Vinogradov's theorem for primes with restricted digits},
   JOURNAL = {Int. Math. Res. Not. IMRN},
  FJOURNAL = {International Mathematics Research Notices. IMRN},
      YEAR = {2025},
    NUMBER = {3},
     PAGES = {Paper No. rnae294, 33},
      ISSN = {1073-7928,1687-0247},
   MRCLASS = {11A63},
  MRNUMBER = {4855348},
MRREVIEWER = {Vilius\ Stakenas},
       DOI = {10.1093/imrn/rnae294},
       URL = {https://doi.org/10.1093/imrn/rnae294},
}

@article {MR4922764,
    AUTHOR = {Green, Ben},
     TITLE = {Waring's problem with restricted digits},
   JOURNAL = {Compos. Math.},
  FJOURNAL = {Compositio Mathematica},
    VOLUME = {161},
      YEAR = {2025},
    NUMBER = {2},
     PAGES = {341--364},
      ISSN = {0010-437X,1570-5846},
   MRCLASS = {11P05 (11A63)},
  MRNUMBER = {4922764},
       DOI = {10.1112/S0010437X24007723},
       URL = {https://doi.org/10.1112/S0010437X24007723},
}

@article {MR4678570,
    AUTHOR = {Granville, Andrew},
     TITLE = {Missing digits and good approximations},
   JOURNAL = {Bull. Amer. Math. Soc. (N.S.)},
  FJOURNAL = {American Mathematical Society. Bulletin. New Series},
    VOLUME = {61},
      YEAR = {2024},
    NUMBER = {1},
     PAGES = {23--53},
      ISSN = {0273-0979,1088-9485},
   MRCLASS = {11J83 (05C40 11A41 11A63 11N05)},
  MRNUMBER = {4678570},
       DOI = {10.1090/bull/1811},
       URL = {https://doi.org/10.1090/bull/1811},
}

@article {MR3958793,
    AUTHOR = {Maynard, James},
     TITLE = {Primes with restricted digits},
   JOURNAL = {Invent. Math.},
  FJOURNAL = {Inventiones Mathematicae},
    VOLUME = {217},
      YEAR = {2019},
    NUMBER = {1},
     PAGES = {127--218},
      ISSN = {0020-9910,1432-1297},
   MRCLASS = {11N05 (11A63)},
  MRNUMBER = {3958793},
MRREVIEWER = {Tsz\ Ho\ Chan},
       DOI = {10.1007/s00222-019-00865-6},
       URL = {https://doi.org/10.1007/s00222-019-00865-6},
}

@article {MR4546880,
    AUTHOR = {Maier, Helmut and Rassias, Michael Th.},
     TITLE = {The ternary {G}oldbach problem with two {P}iatetski-{S}hapiro
              primes and a prime with a missing digit},
   JOURNAL = {Commun. Contemp. Math.},
  FJOURNAL = {Communications in Contemporary Mathematics},
    VOLUME = {25},
      YEAR = {2023},
    NUMBER = {2},
     PAGES = {Paper No. 2150101, 45},
      ISSN = {0219-1997,1793-6683},
   MRCLASS = {11P32 (11A63 11N05)},
  MRNUMBER = {4546880},
MRREVIEWER = {Christian\ Elsholtz},
       DOI = {10.1142/S0219199721501017},
       URL = {https://doi.org/10.1142/S0219199721501017},
}

@article {MR1833070,
    AUTHOR = {Daniel, Stephan},
     TITLE = {On the sum of a square and a square of a prime},
   JOURNAL = {Math. Proc. Cambridge Philos. Soc.},
  FJOURNAL = {Mathematical Proceedings of the Cambridge Philosophical
              Society},
    VOLUME = {131},
      YEAR = {2001},
    NUMBER = {1},
     PAGES = {1--22},
      ISSN = {0305-0041,1469-8064},
   MRCLASS = {11P32 (11N56)},
  MRNUMBER = {1833070},
MRREVIEWER = {Giovanni\ Coppola},
       DOI = {10.1017/S0305004100004643},
       URL = {https://doi.org/10.1017/S0305004100004643},
}

@article {MR4756119,
    AUTHOR = {Sabuncu, Cihan},
     TITLE = {On the moments of the number of representations as sums of two
              prime squares},
   JOURNAL = {Int. Math. Res. Not. IMRN},
  FJOURNAL = {International Mathematics Research Notices. IMRN},
      YEAR = {2024},
    NUMBER = {11},
     PAGES = {9411--9439},
      ISSN = {1073-7928,1687-0247},
   MRCLASS = {11N36},
  MRNUMBER = {4756119},
MRREVIEWER = {Y.-F.\ S.\ P\'etermann},
       DOI = {10.1093/imrn/rnae044},
       URL = {https://doi.org/10.1093/imrn/rnae044},
}

@article {MR4389024,
    AUTHOR = {Sedunova, Alisa},
     TITLE = {Intersections of binary quadratic forms in primes and the
              paucity phenomenon},
   JOURNAL = {J. Number Theory},
  FJOURNAL = {Journal of Number Theory},
    VOLUME = {235},
      YEAR = {2022},
     PAGES = {305--327},
      ISSN = {0022-314X,1096-1658},
   MRCLASS = {11N37 (11N32 11P21)},
  MRNUMBER = {4389024},
MRREVIEWER = {Alessandro\ Languasco},
       DOI = {10.1016/j.jnt.2021.06.035},
       URL = {https://doi.org/10.1016/j.jnt.2021.06.035},
}

@article {MR2252758,
    AUTHOR = {Kumchev, Angel V.},
     TITLE = {On {W}eyl sums over primes and almost primes},
   JOURNAL = {Michigan Math. J.},
  FJOURNAL = {Michigan Mathematical Journal},
    VOLUME = {54},
      YEAR = {2006},
    NUMBER = {2},
     PAGES = {243--268},
      ISSN = {0026-2285,1945-2365},
   MRCLASS = {11L20 (11L15 11N36 11P32 11P55)},
  MRNUMBER = {2252758},
MRREVIEWER = {Scott\ T.\ Parsell},
       DOI = {10.1307/mmj/1156345592},
       URL = {https://doi.org/10.1307/mmj/1156345592},
}

@article {MR2986127,
    AUTHOR = {Tolev, Dojchin I.},
     TITLE = {On the remainder term in the circle problem in an arithmetic
              progression},
   JOURNAL = {Tr. Mat. Inst. Steklova},
  FJOURNAL = {Trudy Matematicheskogo Instituta Imeni V. A. Steklova},
    VOLUME = {276},
      YEAR = {2012},
     PAGES = {266--279},
      ISSN = {0371-9685,3034-1809},
      ISBN = {5-7846-0121-0; 978-5-7846-0121-6},
   MRCLASS = {11P55 (11B25 11B34)},
  MRNUMBER = {2986127},
MRREVIEWER = {Fethi\ Ben Sa\"id},
       DOI = {10.1134/S0081543812010233},
       URL = {https://doi.org/10.1134/S0081543812010233},
}

@article {MR616225,
    AUTHOR = {Plaksin, V. A.},
     TITLE = {Asymptotic formula for the number of solutions of an equation
              with primes},
   JOURNAL = {Izv. Akad. Nauk SSSR Ser. Mat.},
  FJOURNAL = {Izvestiya Akademii Nauk SSSR. Seriya Matematicheskaya},
    VOLUME = {45},
      YEAR = {1981},
    NUMBER = {2},
     PAGES = {321--397, 463},
      ISSN = {0373-2436},
   MRCLASS = {10J15 (10H20)},
  MRNUMBER = {616225},
MRREVIEWER = {H.\ Iwaniec},
}

\end{document}